\newtheorem{theorem}{Theorem}[section]
\newtheorem{lemma}[theorem]{Lemma}
\newtheorem{proposition}[theorem]{Proposition}
\newtheorem{definition}[theorem]{Definition}
\theoremstyle{definition}
\renewcommand{\textbf}[1]{\begingroup\bfseries\mathversion{bold}#1\endgroup}
\newcommand{\indc}{\displaystyle{1\!\!1_{}}}
\newcommand{\Esp}{\mathbb{E}}
\newcommand{\ds}{\displaystyle}
\newcommand{\frc}[2]{\frac{\ds #1}{\ds #2}}
\newcommand{\ens}{\enspace}
\newcommand{\Prb}{\mathbb{P}}
\newcommand{\hLMMD}{\hat{L}}
\newcommand{\hDn}{\hat{L}}
\newcommand{\dd}{\mathrm{d}}
\newcommand{\Reals}{\mathbb{R}}
\newcommand{\prodscal}[2]{< #1 , #2 >} 
\newtheorem{postita}{Post-it}
\title{New normality test in high-dimension with kernel methods}
\author{\textsc{J\'er\'emie Kellner}\\
		\small Laboratoire de Math\'ematiques UMR 8524 CNRS - Universit\'e Lille 1 - MODAL team-project Inria\\
   \small \texttt{jeremie.kellner@ed.univ-lille1.fr}\\
   		\textsc{Alain Celisse}\\
   		\small Laboratoire de Math\'ematiques UMR 8524 CNRS - Universit\'e Lille 1 - MODAL team-project Inria\\
   \small \texttt{celisse@math.univ-lille1.fr}}
\date{}
\begin{document}
\maketitle

\abstract{A new goodness-of-fit test for normality in high-dimension (and Reproducing Kernel Hilbert Space) is proposed.
It shares common ideas with the Maximum Mean Discrepancy (MMD) it outperforms both in terms of computation time and applicability to a wider range of data. 
Theoretical results are derived for the Type-I and Type-II errors. They guarantee the control of Type-I error at prescribed level and an exponentially fast decrease of the Type-II error. 
Synthetic and real data also illustrate the practical improvement allowed by our test compared with other leading approaches in high-dimensional settings.}

\section{Introduction}

Dealing with non-vectorial data such as DNA sequences often requires defining a kernel \cite{Aronszajn}. Further analysis is then carried out in the associated Reproducing Kernel Hilbert Space (RKHS) where data are often assumed to have a Gaussian distribution.
For instance supervised and unsupervised classification are performed in \cite{Bouveyron} by modeling each class as a Gaussian process. 
This key Gaussian assumption is often made implicitly as in Kernel Principal Component Analysis \cite{ZwaldKPCA} to control the reconstruction error \cite{NikolovPCA}, or in \cite{SrivastavaHiDimMeanTest} where a mean equality test is used in high-dimensional setting.
Assessing that crucial assumption appears necessary.

Depending on the (finite or infinite dimensional) structure of the RKHS, Cramer-von Mises-type normality tests \cite{MardiaSkewKurt,HenzeZirkler,SzekelyRizzoENormTest} can be applied. 
However these tests become less powerful as dimension increases (see Table 3 in \cite{SzekelyRizzoENormTest}). 
An alternative approach consists in randomly projecting high-dimensional objects on one-dimensional directions and then applying univariate test on a few randomly chosen marginals \cite{RandomProjTest}. However such approaches also suffer a lack of power (see Section 4.2 in \cite{RandomProjTest}).
More specifically in the RKHS setting, \cite{Gretton_2007} introduced the Maximum Mean Discrepancy (MMD) and design a statistical test to distinguish between the distribution of two samples. However this approach requires \emph{characteristic kernels} \cite{Kernel2sample} and suffers high computational complexity as well as several approximations of the asymptotic distribution.

The main contribution of the present paper is to provide an algorithmically efficient one-sample statistical test of normality for data in a RKHS (of possibly infinite dimension). However the strategy we describe can be easily extended to the two-sample setting.
Section~\ref{high.dimension.tests} introduces goodness-of-fit tests available in high dimensional settings. They will serve as references in our simulation experiments.
The new goodness-of-fit test is described in Section~\ref{sec.new.test}, while its theoretical performance is detailed in Section~\ref{sec.theoretical.assessment} in terms of control of Type-I and Type-II errors.
Finally results of experiments on synthetic and real data highlight the great theoretical and practical improvement allowed by the new statistical test. They are collected in Section~\ref{sec.experimental.results}. 

\section{High-dimensional goodness-of-fit tests}
\label{high.dimension.tests}
\subsection{Statistical test framework}
\sloppy 
Let $(\mathcal{H},\mathcal{A})$ be a measurable space, and $Y_1, \dots, Y_n \in~\mathcal{H}$ denote a sample of \emph{independent and identically distributed} (\textit{i.i.d.}) random variables drawn from an unknown distribution $P\in\mathcal{P}$, where  $\mathcal{P}$ is a set of distributions defined on $\mathcal{A}$. 

Following \cite{LehRom_2005}, let us define the null hypothesis $H_0:\ P\in \mathcal{P}_0$, and the alternative hypothesis $H_1:\ P\not\in \mathcal{P}\setminus \mathcal{P}_0$ for any subset $\mathcal{P}_0$ of $\mathcal{P}$.
The purpose of a statistical test $\mathcal{T}(Y_1,\ldots,Y_n)$ of $H_0$ against $H_1$ is to distinguish between the null ($H_0$) and the alternative ($H_1$) hypotheses.
For instance if $\mathcal{P}_0$ reduces to a univariate Gaussian distribution with mean $\mu_0$ and variance $\sigma_0^2$, $\mathcal{T}(Y_1,\ldots,Y_n)$ determines whether $P=\mathcal{N}(\mu_0,\sigma_0^2)$ is true or not for a prescribed level of confidence $0<\alpha<1$.
\subsection{Projection-based statistical tests}
\label{RndProjTest}
In the high-dimensional setting, several approaches share a common projection idea dating back to the Cramer-Wold theorem extended to infinite dimensional Hilbert space.
\begin{proposition}{(Prop.~2.1 from \cite{RandomProjTest})} 
Let $\mathcal{H}$ be a separable Hilbert space with inner product $<\cdot , \cdot>$, and $Y,Z\in\mathcal{H}$ denote two random variables with respective Borel probability measures $P_Y$ and $P_Z$. 
If for every $h \in \mathcal{H}$
\begin{equation*}
<Y, h> = <Z, h> \text{ in distribution} \ens ,
\end{equation*}
then $ P_Y = P_Z $.
\end{proposition}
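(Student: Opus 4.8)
The plan is to reduce the claim to the classical uniqueness theorem for characteristic functions on $\mathbb{R}^n$, combined with the fact that in a separable Hilbert space the Borel $\sigma$-algebra is generated by the coordinate functionals of an orthonormal basis. First I would rephrase the hypothesis in terms of characteristic functionals: since $\langle Y,h\rangle$ and $\langle Z,h\rangle$ have the same law on $\mathbb{R}$, applying the expectation to the bounded continuous function $u\mapsto e^{iu}$ gives $\mathbb{E}\!\left[e^{i\langle Y,h\rangle}\right]=\mathbb{E}\!\left[e^{i\langle Z,h\rangle}\right]$ for every $h\in\mathcal{H}$, i.e.\ $Y$ and $Z$ have the same characteristic functional.

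Next, fix an orthonormal basis $(e_k)_{k\ge1}$ of $\mathcal{H}$ (available by separability) and, for each $n\ge1$, the projection $\pi_n:\mathcal{H}\to\mathbb{R}^n$, $\pi_n(x)=(\langle x,e_1\rangle,\dots,\langle x,e_n\rangle)$. For $t=(t_1,\dots,t_n)\in\mathbb{R}^n$ we have $\sum_{k=1}^n t_k\langle Y,e_k\rangle=\langle Y,\sum_{k=1}^n t_k e_k\rangle$, so the characteristic function of the image measure $P_Y\circ\pi_n^{-1}$ on $\mathbb{R}^n$ is exactly the restriction of the characteristic functional of $Y$ to $\operatorname{span}(e_1,\dots,e_n)$. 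By the previous step it coincides with that of $P_Z\circ\pi_n^{-1}$, so the uniqueness theorem on $\mathbb{R}^n$ yields $P_Y\circ\pi_n^{-1}=P_Z\circ\pi_n^{-1}$ for every $n$: the two measures have identical finite-dimensional marginals.

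Finally I would propagate this to all Borel sets. The cylinders $\pi_n^{-1}(B)$ with $n\ge1$ and $B\in\mathcal{B}(\mathbb{R}^n)$ form an algebra, hence a $\pi$-system, on which $P_Y$ and $P_Z$ agree by the previous step. Since $\mathcal{H}$ is separable, the norm $\|x\|^2=\sum_k\langle x,e_k\rangle^2$ is a measurable function of the coordinates (being the supremum over $n$ of the $\pi_n$-measurable functions $x\mapsto\sum_{k=1}^n\langle x,e_k\rangle^2$), so every open ball, and hence by separability all of $\mathcal{B}(\mathcal{H})$, lies in $\sigma\big(\bigcup_{n\ge1}\pi_n^{-1}(\mathcal{B}(\mathbb{R}^n))\big)$. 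Dynkin's $\pi$-$\lambda$ theorem then forces $P_Y=P_Z$ on all of $\mathcal{B}(\mathcal{H})$.

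The characteristic-function steps are routine; the only delicate point is the last one, where separability is precisely what ensures that the coordinate cylinders generate the entire Borel $\sigma$-algebra, so that equality of every finite-dimensional marginal upgrades to equality of the measures---this is where the separability hypothesis in the statement is genuinely used.
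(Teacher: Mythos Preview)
Your argument is correct and is the standard route to this infinite-dimensional Cram\'er--Wold theorem: pass to the characteristic functional, deduce equality of all finite-dimensional marginals via uniqueness on $\mathbb{R}^n$, and then use that in a separable Hilbert space the cylinder $\sigma$-algebra coincides with the Borel $\sigma$-algebra.

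There is nothing to compare against, however: the paper does not prove this proposition. It is simply quoted as Prop.~2.1 of \cite{RandomProjTest} and used as background for the projection-based tests, with no proof or sketch provided. So your write-up stands on its own rather than as an alternative to anything in the text.
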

Since considering all possible directions $h$ is impossible with high-dimensional $\mathcal{H}$, \cite{RandomProjTest} suggest to randomly choose some of them from a Gaussian measure. Given an \iid sample $Y_1, \dots, Y_n$, a Kolmogorov-Smirnov test is performed from $\prodscal{Y_1}{h} , \ldots, \prodscal{Y_n}{h}$ for each $h$, leading to the test statistic
\begin{equation*}
D_n(h) = \sup_{x \in \Reals} |\hat{F}_n (x) - F_0(x)| \ens ,
\end{equation*}
where $\hat{F}_n (x)$ is the empirical cdf of $(<Y_i, h>)_i$ and $F_0$ denotes the cdf of the $<Z, h>$.

Since \cite{RandomProjTest} proved too few directions lead to a less powerful test, 
this can be repeated for several randomly chosen directions $h$, keeping then the largest value for $D_n(h)$.
However the test statistic is no longer distribution-free (unlike the univariate Kolmogorov-Smirnov one) when the number of directions is larger than 2. Therefore for a given confidence level on the Type-I error, the test threshold (quantile) must be estimated through Monte-Carlo simulations.
\subsection{The Maximum Mean Discrepancy (MMD)}
\label{MMDTest}

Following \cite{Gretton_2007} the gap between two distributions $P$ and $P_0$ can be measured by
\begin{equation}
\label{GeneralGOFStat}
\Delta(P, P_0) = \sup_{f \in \mathcal{F}} |\Esp_{Y \sim P} f(Y) - \Esp_{Z \sim P_0} f(Z)|,
\end{equation}
where $\mathcal{F}$ is a class of real valued functions. 
Such a quantity is called Maximum Mean Discrepancy (MMD).
Regardless of $\mathcal{F}$, \eqref{GeneralGOFStat} only defines a pseudo-metric on probability distributions (see \cite{HilbertEmbed}). 
In particular it is shown $\Delta(\cdot,\cdot)$ becomes a metric if $\mathcal{F}=H(k)$ is the reproducing kernel Hilbert space (RKHS) \cite{HilbertEmbed} associated with a kernel $k=k(\cdot,\cdot)$ that is \emph{characteristic}.
\begin{definition}{(Characteristic kernel)}\\
Let $\mathcal{F} = H(k)$ in \eqref{GeneralGOFStat} for some kernel $k$. 
Then $k$ is a characteristic kernel if $\Delta(P, P_0)=0$ implies $P=P_0$.
\end{definition}

In practice the MMD has to be easily computed although the supremum in \eqref{GeneralGOFStat}. 
One major interest of taking $\mathcal{F}$ as the unit ball of $H(k)$ is that $\Delta(P,P_0)$ can be cast as an easy to compute quantity as follows.
Let us first introduce the Hilbert space embedding of a distribution $P$.
\begin{definition}{(Hilbert space embedding, Lemma 3 from \cite{Kernel2sample})}
Let $P$ be a distribution such that $\Esp_{Y \sim P} \sqrt{k(Y, Y)}<+\infty$.\\
Then there exists $\mu_P \in H(k)$ such that for every $f \in H(k)$,
\begin{equation}
<\mu_P, f> = \Esp f(Y) \ens .
\end{equation}
$\mu_P$ is called the Hilbert space embedding of $P$ in $H(k)$.
\end{definition}
Then $\Delta (P, P_0)$ can be expressed as the gap between the Hilbert space embeddings of $P$ and $P_0$:
\begin{align}
\Delta (P,P_0) & =  \sup_{f \in H(k), ||f|| \leq 1} |\Esp_P f(Y) - \Esp_{P_0} f(Z)|\notag\\
         & =  \sup_{f \in H(k), ||f|| \leq 1} |<\mu_{P} - \mu_{P_0}, f>|\notag\\
         & =  ||\mu_{P} - \mu_{P_0}|| \ens .
         \label{MMDGap}
\end{align}
Since $\mu_P$ can be estimated by $1/n \sum_{i=1}^n k(Y_i,\cdot)$, \cite{Gretton_2007}, an estimator of \eqref{MMDGap} can be derived as
\begin{align*}
  \widehat{\Delta} =  \frac{1}{n} \paren{ \sum_{i,j=1}^n \croch{ k(Y_i,Y_j) + k(Z_i,Z_j) - 2 k(Y_i,Z_j) } }^{1/2} ,
\end{align*}
where $(Y_1,\ldots,Y_n)$ and  $(Z_1,\ldots,Z_n)$ are samples of \iid random variables with respective distributions $P$ and $P_0$.

However the MMD-based approach suffers two main drawbacks: $(i)$ it requires a characteristic kernel, which restricts its applicability, and $(ii)$ the distribution of the test statistic $\widehat{\Delta}$ has to be approximated at two levels, which reduces the statistical test power.
On the one hand, one purpose of the present work is to design a strategy allowing to deal with very general objects. It is typically the setting where no characteristic kernel does necessarily exist, or at least where conditions to check the characteristic property are completely awkward (see \cite{HilbertEmbed}).
On the other hand, the distribution of $\widehat{\Delta}$ is first approximated by its asymptotic one, which is an infinite sum of weighted non-centered chi-squares \cite[Theorem 12]{Kernel2sample}. Second the distribution parameters have to be approximated through the eigendecomposition of a recentered Gram matrix (see Section 3.2 in \cite{FastConsistentKern2Test}), which is computationally costly.

\section{New normality test in RKHS}
\label{sec.new.test}

\subsection{Goal}

Let $X_1, \dots, X_n \in \X$ be \iid random variables. One only require $\X$ can be equipped with a positive definite kernel $k$ associated with $H(k)$. The typical example of $\mathcal{X}$ one may consider is a set of DNA sequences. 

Focusing on $Y_i = k(X_i, .) \in H(k)$ for all $1 \leq i \leq n$, our goal is to test whether $Y_i = k(X_i, .)$ follows a Gaussian distribution $P_0 = \mathcal{N}(\mu, \Sigma)$.
Since $H(k)$ is a function space, a Gaussian variable $Z\in H(k)$ is a \textit{Gaussian process}.
\begin{definition}{(Gaussian process)}\\
\label{def.gaussproc1}
$Z$ is a Gaussian process if there exists a probability space $(\Omega, \mathcal{F}, \mathbb{P})$ such that for any $a_1, \dots, a_n \in \mathbb{R}$ and $x_1, \dots, x_n \in \mathcal{X}$, $\sum_{i=1}^n a_i Y(x_i)$ is a univariate Gaussian random variable on $(\Omega, \mathcal{F}, \mathbb{P})$.\\
Its mean $\mu \in H(k)$ and covariance function $\Sigma: \mathcal{X} \times \mathcal{X} \to \mathbb{R}$ are defined for every $x, y \in \mathcal{X}$ by:
\begin{equation*}
\mu (x) = \mathbb{E} Z(x), \qquad \Sigma (x, y) = \mathrm{cov}(Z(x), Z(y)) \ens .
\end{equation*}
\end{definition}
%
By considering $H(k)$ as a linear space instead of a function space, a Gaussian process can be defined in an equivalent way. 
\begin{definition}{(Gaussian process)}\\
\label{def.gaussproc2}
$Z$ is a Gaussian process if there exists a probability space $(\Omega, \mathcal{F}, \mathbb{P})$ such that for any $f \in H(k)$, $<Z, f>$ is a univariate Gaussian random variable on $(\Omega, \mathcal{F}, \mathbb{P})$.\\
Its mean $\mu \in H(k)$ and covariance operator $\Sigma_{op} \in HS(H(k))$ are defined for every $f, g \in H(k)$ by:
\begin{align*}
<\mu, f> = & \ens \mathbb{E} <Z, f>,\\ 
<\Sigma_{op} f, g> = & \ens \mathrm{cov}(<Z, f>, <Z, g>),
\end{align*}
where $HS(H(k))$ denotes the space of all linear applications $H(k) \to H(k)$ with finite trace (Hilbert-Schmidt operators).
\end{definition}
The means in Definitions~\ref{def.gaussproc1} and \ref{def.gaussproc2} coincide. $\Sigma$ and $\Sigma_{op}$ are linked by the following equality for every $x, y \in \mathcal{X}$
\begin{equation*}
<\Sigma_{op} k(x, .), k(y, .)> = \Sigma(x, y) \ens .
\end{equation*}

Remark that if $\mathcal{X} = \mathbb{R}^d$ and $k = <.,.>_{\mathbb{R}^d}$, then $H(k)$ is the dual of $\mathbb{R}^d$ (that is the set of all linear forms $<x, .>_{\mathbb{R}^d}$ on $\mathbb{R}^d$). In this case, $H(k)$ is isomorphic to $\mathbb{R}^d$ and Gaussian processes in $H(k)$ are reduced to multivariate Gaussian variables in $\mathbb{R}^d$.

\subsection{New test procedure}

Let us assume $\mu$ and $\Sigma$ are known, and also $\Esp Y_i = \mu = 0$ for every $1 \leq i\leq n$, for the sake of simplicity.

\subsubsection{Algorithm}

We provide the main steps of the whole test procedure, which are further detailed in Sections~\ref{DefLMMD}--\ref{sssc.qtl.estim}.
\begin{enumerate}
\item \textbf{Input:} $X_1, \dots, X_n \in \mathcal{X}$, $k: \mathcal{X} \times \mathcal{X} \to \R$ (kernel), $\Sigma$ (covariance function), and $0<\alpha<1$ (test level).

\item Compute $K = \croch{k(X_i, X_j)}_{i, j}$ (Gram matrix) and $C = \croch{\Sigma(X_i, X_j)}_{i,j}$ (covariance matrix).
 
\item Compute $n \hLMMD^2$ (test statistic) from \eqref{gofstat} that depends on $K$ and $C$ (Section \ref{sssec.new.test.stat})

\item 
\begin{enumerate}
  \item Draw $B$ Monte-Carlo samples $X_1^b, \ldots, X_n^b$ under $H_0$, for $b = 1, \ldots, B$.
  \item Compute $\hat{q}_{\alpha, n}$ ($1-\alpha$ quantile of $n \hLMMD^2$ under $H_0$) (Section \ref{sssc.qtl.estim}).
\end{enumerate}

\item \textbf{Output:} Reject $H_0$ if $n \hLMMD^2 > \hat{q}_{\alpha, n}$, and accept otherwise.

\end{enumerate}
The computation time of $\hat{q}_{\alpha, n}$ is of order $\mathcal{O}(Bn^2)$, which is faster than estimating the MMD limit distribution quantile as long as $n\geq B$ (Section~\ref{sssc.qtl.estim} and Section~\ref{ssc.exec.time}).

\subsubsection{Laplace-MMD (L-MMD)}
\label{DefLMMD}

The Laplace-MMD test (L-MMD) follows the same idea as the MMD test, but improves upon it by relaxing the restrictive assumption of \emph{characteristic kernel}.
Using that Laplace transform $\mathcal{L}_U(t) = \Esp_U \exp(tU)$ characterizes the distribution of a random variable $U\in \R$, the gap between two distributions $P$ and $P_0$ can be evaluated by
\begin{align}
\Delta L = & \sup_{f\in H(k), ||f|| \leq 1} \left|\Esp_{Y \sim P} e^{<Y, f>} - \Esp_{Z \sim P_0} e^{<Z, f>}\right| \nonumber \\
= & \sup_{||f|| = 1} \sup_{\abs{t} \leq 1} \left|\mathcal{L}_{<Y, f>}(t) - \mathcal{L}_{<Z, f>}(t)\right| \ens ,
\label{LMMDGap2}
\end{align}
where $<\cdot, \cdot>$ denotes the inner-product in $H(k)$.
Therefore $\Delta L = 0$ implies $<Y, f>$ and $<Z, f>$ have the same distribution for every $f$, which provides $P = P_0$ by the Cramer-Wold theorem (Proposition~\ref{RndProjTest}).

Deriving an efficient test procedure requires to provide a quantity related to \eqref{LMMDGap2} that is easy to compute.
Following \eqref{MMDGap} this is done rephrasing $\exp\paren{<\cdot,\cdot>} = \bar k$ as a new positive definite kernel associated with a new RKHS $H(\bar k)$.
%
%
Thus it allows to get a computable form of \eqref{LMMDGap2}.\\
\begin{theorem}
\label{th1}
Assume $\max\paren{ \E_P e^{||Y||} , \E_{P_0} e^{||Z||} } < \infty$.
Let $\bar{\mu}_P, \bar{\mu}_{P_0} \in H(\bar{k})$ be respective embeddings of $P$ and $P_0$. 
Then,
\begin{equation}
\label{LMMD}
L = L (P, P_0) := ||\bar{\mu}_P - \bar{\mu}_{P_0} ||_{H(\bar{k})} \ens ,
\end{equation}
equals zero if and only if $P = P_0$.
\end{theorem}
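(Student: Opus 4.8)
The plan is to exploit the fact that $L(P,P_0) = \|\bar\mu_P - \bar\mu_{P_0}\|_{H(\bar k)}$ is (by construction) exactly the MMD associated with the kernel $\bar k = \exp(\langle\cdot,\cdot\rangle)$, so that $L(P,P_0)=0$ is equivalent to $\bar\mu_P = \bar\mu_{P_0}$ as elements of $H(\bar k)$. The ``if'' direction is trivial: if $P=P_0$ then the embeddings coincide and $L=0$. The substantive direction is ``only if'', and the key is to show that $\bar k$ is a \emph{characteristic} kernel on $H(k)$, i.e. that $\bar\mu_P = \bar\mu_{P_0}$ forces $P=P_0$. The moment assumption $\max(\E_P e^{\|Y\|}, \E_{P_0} e^{\|Z\|})<\infty$ is exactly what is needed to guarantee $\E\sqrt{\bar k(Y,Y)} = \E e^{\|Y\|^2/2}$-type finiteness so that the embeddings $\bar\mu_P,\bar\mu_{P_0}$ are well defined via the Hilbert space embedding lemma (this needs a small check, since $\bar k(Y,Y) = e^{\|Y\|^2}$ rather than $e^{\|Y\|}$ — I would either strengthen the hypothesis slightly or observe that for the reproducing property only $\E|\bar k(Y,f)| = \E e^{\langle Y,f\rangle} \le \E e^{\|f\|\,\|Y\|} < \infty$ for $\|f\|\le 1$ is needed, which follows directly from the stated assumption).

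First I would use the reproducing property in $H(\bar k)$: for every $f\in H(k)$, the function $\bar k(f,\cdot) = \exp(\langle f,\cdot\rangle)$ lies in $H(\bar k)$, and $\langle \bar\mu_P, \bar k(f,\cdot)\rangle_{H(\bar k)} = \E_{Y\sim P}\, \bar k(f,Y) = \E_{Y\sim P} e^{\langle Y,f\rangle}$. Hence $\bar\mu_P = \bar\mu_{P_0}$ implies
\[
\E_{Y\sim P}\, e^{\langle Y,f\rangle} = \E_{Z\sim P_0}\, e^{\langle Z,f\rangle} \qquad \text{for every } f\in H(k).
\]
Second, I would fix an arbitrary direction $h\in H(k)$ with $\|h\|=1$ and apply the previous identity to $f = th$ for $t$ ranging over a neighbourhood of $0$ (the moment hypothesis ensures both sides are finite, and in fact analytic in $t$, for $|t|\le 1$). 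This gives equality of the moment generating functions of the real random variables $\langle Y,h\rangle$ and $\langle Z,h\rangle$ on an interval around $0$; since an m.g.f. finite on a neighbourhood of the origin determines the distribution uniquely, $\langle Y,h\rangle$ and $\langle Z,h\rangle$ have the same law. Third, since $h$ was arbitrary, the Cramer--Wold-type result (Proposition~\ref{RndProjTest}) for separable Hilbert spaces yields $P=P_0$, completing the proof.

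The main obstacle is the well-definedness and integrability bookkeeping around $\bar k$: one must make sure the Hilbert space embedding $\bar\mu_P$ exists in $H(\bar k)$ under the stated hypothesis (reconciling $\bar k(Y,Y)=e^{\|Y\|^2}$ with the assumed $\E e^{\|Y\|}<\infty$), and that the function $t\mapsto \E e^{t\langle Y,h\rangle}$ is genuinely finite on a full neighbourhood of $0$ — both of which reduce, via Cauchy--Schwarz in $H(k)$ giving $|\langle Y,h\rangle|\le\|Y\|$, to controlling $\E e^{\|Y\|}$. Once that is in place, the chain ``embeddings equal $\Rightarrow$ m.g.f.'s of all one-dimensional marginals equal $\Rightarrow$ marginals equal in law $\Rightarrow$ $P=P_0$'' is routine. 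A secondary subtlety worth a sentence is measurability: one should note that $f\mapsto\langle Y,f\rangle$ and hence $\bar k(Y,\cdot)$ are measurable so that the expectations defining the embeddings make sense, which follows from separability of $H(k)$.
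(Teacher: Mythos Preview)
Your proposal is correct and follows essentially the same route as the paper: the paper packages your ``test $\bar\mu_P-\bar\mu_{P_0}$ against $\bar k(f,\cdot)$'' step as the single inequality $\Delta L \le e^{1/2} L(P,P_0)$, obtained from the inclusion $\{\bar k(f,\cdot):\|f\|_{H(k)}\le 1\}\subset\{h\in H(\bar k):\|h\|_{H(\bar k)}\le e^{1/2}\}$, and then both arguments conclude via the Laplace/MGF characterisation of the one-dimensional marginals combined with the Cram\'er--Wold theorem. Your integrability bookkeeping around the existence of $\bar\mu_P$ is in fact more careful than the paper's, which simply takes the embeddings as given.
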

\begin{proof}
Introducing $\bar k = \exp\paren{<\cdot,\cdot>}$, it comes
\begin{align*} 
\Delta L \leq & \sup_{h \in H(\bar{k}), ||h|| \leq e^{1/2}} |<\bar{\mu}_P - \bar{\mu}_{P_0} , h>|_{H(\bar{k})} \\
 = & \ e^{1/2} ||\bar{\mu}_P - \bar{\mu}_{P_0} ||_{H(\bar{k})} 
= \ e^{1/2} L (P, P_0) 
\ens ,
\end{align*}
where the inequality results from $\acc{ \bar k\paren{f,\cdot}, ||f||_{H(k)} \leq 1 }$  $\subset$ $\acc{ h \in H(\bar{k}), ||h||_{H(\bar{k})} \leq e^{1/2} }$.
Therefore $L(P, P_0) = 0$ implies $\Delta L=0$ and $P = P_0$. 
Conversely $P=P_0$ implies $\bar{\mu}_P = \bar{\mu}_{P_0}$ and $L(P, P_0) = 0$.
\end{proof}

\subsubsection{New test statistic}
\label{sssec.new.test.stat}

As in \cite{Gretton_2007}, $L$ can be estimated by replacing $\bar{\mu}_P $ with the sample mean $\hat{\bar{\mu}}_P = 1/n \sum_{i=1}^n e^{<Y_i, .>}$, leading to Proposition~\ref{prop.def.L}.
\begin{proposition}\label{prop.def.L}
Assume the null-distribution $P_0$ is Gaussian $\mathcal{N}(0, \Sigma)$ and the largest eigenvalue $\lambda$ of $\Sigma$ is smaller than $1$.
Then the following statistic $n\hLMMD^2$ is an unbiaised estimator of $n L^2$, with
\begin{equation}
n \hLMMD^2  = \frac{1}{n-1} \sum_{i \neq j}^n e^{ k(X_i,X_j) } - 2 \sum_{i = 1}^n e^{ \frac{1}{2} \Sigma(X_i, X_i) }  + n b^2 \ens ,
\label{gofstat}
\end{equation}
where
$ b^2 := ||\bar{\mu}_{P_0}||^2 = \left[\mathrm{det}(I - \Sigma^2)\right]^{-1/2}$.
\end{proposition}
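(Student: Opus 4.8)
This last step is the crux. I would diagonalise the covariance operator, but let me first lay out the overall plan.

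The plan is to expand $nL^2$ by bilinearity of the inner product in $H(\bar k)$ and to estimate each piece separately. Writing $L^2 = \|\bar\mu_P\|^2 - 2\langle \bar\mu_P,\bar\mu_{P_0}\rangle + \|\bar\mu_{P_0}\|^2$ (all norms and inner products in $H(\bar k)$) and using the reproducing property of $\bar k$ together with $\bar k(x,y)=\exp\paren{\langle k(x,\cdot),k(y,\cdot)\rangle_{H(k)}}=e^{k(x,y)}$, one obtains, for independent copies $X,X'\sim P$ and $Z,Z'\sim P_0$, the representations $\|\bar\mu_P\|^2 = \Esp\, e^{k(X,X')}$, $\langle\bar\mu_P,\bar\mu_{P_0}\rangle = \Esp_X\Esp_Z\, e^{\langle k(X,\cdot),Z\rangle}$ and $b^2=\|\bar\mu_{P_0}\|^2 = \Esp\, e^{\langle Z,Z'\rangle}$. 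The first term is estimated without bias by the $U$-statistic $\tfrac1{n(n-1)}\sum_{i\neq j}e^{k(X_i,X_j)}$. For the cross term, conditionally on $X$ the real random variable $\langle k(X,\cdot),Z\rangle$ is, by Definition~\ref{def.gaussproc2}, centred Gaussian with variance $\langle\Sigma_{op}k(X,\cdot),k(X,\cdot)\rangle=\Sigma(X,X)$, so its Laplace transform at $t=1$ equals $e^{\Sigma(X,X)/2}$ and $\langle\bar\mu_P,\bar\mu_{P_0}\rangle=\Esp\, e^{\Sigma(X,X)/2}$, estimated without bias by $\tfrac1n\sum_i e^{\Sigma(X_i,X_i)/2}$. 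Multiplying by $n$ recovers the first two terms of \eqref{gofstat}, so it remains only to identify $b^2$ with $[\det(I-\Sigma^2)]^{-1/2}$.

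For this last step I would diagonalise the covariance operator, writing its eigenvalues $\lambda=\lambda_1\ge\lambda_2\ge\cdots\ge 0$ (summable, as $\Sigma_{op}$ is trace-class) with an orthonormal eigenbasis $\paren{\phi_k}_k$, and invoke the Karhunen-Lo\`eve expansions $Z=\sum_k\sqrt{\lambda_k}\,g_k\phi_k$ and $Z'=\sum_k\sqrt{\lambda_k}\,g_k'\phi_k$ with all $g_k,g_k'$ i.i.d.\ $\mathcal N(0,1)$. Then $\langle Z,Z'\rangle=\sum_k\lambda_k g_k g_k'$, and by independence, after a monotone/dominated convergence argument to pass the expectation through the infinite product, $b^2=\prod_k\Esp\, e^{\lambda_k g_k g_k'}$. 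A short computation (integrate first in $g_k$, then use $\Esp\, e^{sG^2/2}=(1-s)^{-1/2}$ for $s<1$) gives $\Esp\, e^{\lambda_k g_k g_k'}=(1-\lambda_k^2)^{-1/2}$, where the hypothesis $\lambda<1$ is exactly what guarantees $\lambda_k^2\le\lambda^2<1$ for every $k$, so that each factor, and hence $b^2$, is finite. Finally, since the eigenvalues of $\Sigma^2=\Sigma_{op}^2$ are the $\lambda_k^2$ and $\sum_k\lambda_k^2<\infty$, the convergent product $\prod_k(1-\lambda_k^2)$ is the Fredholm determinant $\det(I-\Sigma^2)$, whence $b^2=[\det(I-\Sigma^2)]^{-1/2}$.

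Putting the pieces together gives $\Esp[n\hLMMD^2]=n\paren{\|\bar\mu_P\|^2-2\langle\bar\mu_P,\bar\mu_{P_0}\rangle+b^2}=nL^2$. The main obstacle is the evaluation of $b^2$: one must rigorously justify the Karhunen-Lo\`eve expansion of the $H(k)$-valued Gaussian $Z$, the interchange of expectation with the infinite product, and the identification of $\prod_k(1-\lambda_k^2)$ with $\det(I-\Sigma^2)$; note that the condition $\lambda<1$ is not cosmetic but precisely the finiteness condition for $b^2$. The remaining ingredients --- the bilinear expansion, the reproducing property of $\bar k$, and the Gaussian Laplace-transform identities --- are routine, the only care needed being the integrability assumption $\Esp_P e^{\|Y\|}<\infty$ of Theorem~\ref{th1}, ensuring $\bar\mu_P$ is well defined and all expectations above are finite, so that ``unbiased'' is meaningful.
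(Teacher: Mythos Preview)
Your argument for unbiasedness is essentially the paper's: take the expectation of each term in \eqref{gofstat}, use the reproducing property $\bar k(y,y')=e^{\langle y,y'\rangle_{H(k)}}$ to identify the $U$-statistic with $\|\bar\mu_P\|^2$, use the Gaussian Laplace transform $\Esp_Z e^{\langle y,Z\rangle}=e^{\frac12\langle\Sigma_{op}y,y\rangle}$ for the cross term, and collapse everything to $n\|\bar\mu_P-\bar\mu_{P_0}\|^2=nL^2$. The paper's proof is exactly this chain of equalities, written in two lines.

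Where you go beyond the paper is in actually computing $b^2=\|\bar\mu_{P_0}\|^2=[\det(I-\Sigma^2)]^{-1/2}$ via the Karhunen--Lo\`eve expansion and the one-dimensional identity $\Esp\,e^{\lambda gg'}=(1-\lambda^2)^{-1/2}$. The paper simply states this formula in the proposition and uses only the definition $b^2=\|\bar\mu_{P_0}\|^2$ in its proof, never justifying the determinant expression. Your derivation is correct and is what a complete proof of the full statement requires; you are also right that the eigenvalue condition $\lambda<1$ is precisely what makes each factor $(1-\lambda_k^2)^{-1/2}$ finite and the infinite product convergent. So your route is the same as the paper's for the unbiasedness claim, but strictly more complete on the identification of $b^2$.
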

The eigenvalue condition $\lambda < 1$ is not restrictive. With
any $\gamma > 0$ such that $\gamma \lambda < 1$, one can compare $\gamma^{1/2} Y_i$ with $\mathcal{N}(0, \gamma \Sigma)$. 
The Gram matrix becomes $K^{\prime} = \gamma K$ and the covariance matrix $C^{\prime} = \gamma^2 C$.

Since it involves $n \times n$ matrices, the computation time for $n \hLMMD^2$ is the same as that of $\widehat\Delta$ (Section~\ref{MMDTest}), that is of order $\mathcal{O}(n^2)$. 
\begin{proof}
We prove that \eqref{gofstat} is an unbiaised estimator of $n L^2$, that is its mean equals $n L^2$.
\begin{align*}
\Esp n \hLMMD^2  = & \frac{1}{n-1} \sum_{i \neq j}^n \Esp e^{ k(X_i,X_j) } - 2 \sum_{i = 1}^n \Esp e^{ \frac{1}{2} \Sigma(X_i, X_i) }  + n b^2 \\
 = &\frac{1}{n-1} \sum_{i \neq j}^n \Esp e^{<Y_i, Y_j>_{H(k)}} \\
& - 2 \sum_{i = 1}^n \Esp_{Z \sim P_0} e^{ <\Sigma_{op} Y_i, Y_i>_{H(k)} }  + n ||\mu_{P_0}||^2 \\
= & n ||\mu_P ||_{H(\bar{k})}^2 - 2 n <\mu_P, \mu_{P_0}>_{H(\bar{k})} + ||\mu_{P_0} ||_{H(\bar{k})}^2\\
= & n ||\mu_P - \mu_{P_0} ||^2 = n L^2 \ens .
\end{align*}
\end{proof}

\subsubsection{Quantile estimation}
\label{sssc.qtl.estim}
Designing a test with confidence level $0 < \alpha < 1$ requires to compute the smallest $\epsilon>0$ such that $\Prb_{H_0} (n \hLMMD^2 > \epsilon) \leq \alpha$ (Type-I error), which is the $1 - \alpha$ quantile of the $n \hLMMD^2$ distribution under $H_0$ denoted by $q_{\alpha, n}$. Unfortunately $q_{\alpha, n}$ is unknown and has to be estimated.

Our purpose is to improve on the MMD strategy described in \cite{Kernel2sample} in terms of power of detection by considering the \emph{finite sample} null-distribution of $n \hLMMD^2$ rather than the asymptotic one.
The improvement allowed by our strategy is illustrated by empirical results (see Figure~\ref{TypeTwoErrRealD} for instance).

Since the $H_0$-distribution $P_0 = \mathcal{N}(0, \Sigma)$ is known, $B>0$ \iid copies $n \hLMMD^2_{(1)}, \dots, n \hLMMD^2_{(B)}$ of $n \hLMMD^2$ are drawn to estimate $q_{\alpha, n}$.
More precisely for each $1 \leq b \leq B$, 
\begin{align}
n \hLMMD^2_{(b)} = & \ens \frac{1}{n-1} \sum_{i \neq j} e^{<Z^{(b)}_i, Z^{(b)}_j>} + n ||\bar{\mu}_{P_0} ||^2 \notag \\
& - 2 \sum_{i=1}^n \exp\left(\frac{1}{2} <Z^{(b)}_i, \Sigma_{op} Z^{(b)}_i>\right) \ens ,
\end{align}
where $Z_1^{(b)}, \dots, Z_n^{(b)} \stackrel{\iid}{\sim} P_0$.
%
%
Let us consider covariance $\Sigma_{op}$ with a finite eigenvalue decomposition $\Sigma_{op} = \sum_{r = 1}^{d} \lambda_r \Psi_r^{\otimes 2}$, with nonincreasing eigenvalues $\lambda_1\geq \ldots \geq \lambda_d\geq 0$ and eigenvectors $\acc{\Psi_i^{\otimes 2}}_{i=1,\ldots,d}$. 
From $G_{i, r}^{(k)} := \lambda_r^{-1/2} <Z_i^{k}, \Psi_r>$, the $(G_{i, r}^{(k)})_{i, r, k}$s are independent real-valued $\mathcal{N}(0,1)$, which leads to
\begin{align*}
<Z^{(k)}_i, Z^{(k)}_j>  & = \sum_{r = 1}^d \lambda_r G_{i, r}^{(k)} G_{j, r}^{(k)} \\
<Z^{(k)}_i, \Sigma_ {op} Z^{(k)}_i>  & = \sum_{r = 1}^d \lambda_r^2 \left[G_{i, r}^{(k)}\right]^2 \ens .
\end{align*}

Let us now explain how the quantile estimator is computed. 
Assuming these $B$ copies of $n \hLMMD^2$ are ordered in increasing order $n \hLMMD^2_{(1)}\leq  \dots \leq n \hLMMD^2_{(B)}$, let us define 
\begin{align}\label{exp.quantile.estimator}
\quad \hat{q}_{\alpha, n} := n \hLMMD^2_{(\ell)},\qquad  \ell = \lfloor B+2-\alpha (B+1) \rfloor 
\end{align}
where $\lfloor \cdot \rfloor$ denotes the  integer part. 
This particular choice of $\ell$ is completely justified by the Type-I error control provided in Proposition~\ref{PropTypeIerr}. 
Finally the rejection region is defined by
\begin{equation}
\mathcal{R}_{\alpha} = \{ n \hLMMD^2 > \hat{q}_{\alpha, n} \} \ens . 
\end{equation}

Estimating $q_{\alpha, n}$ requires simulating  $B  \times n \times d$ real Gaussian variables $\mathcal{N}(0,1)$ and computing $B$ copies of $n \hLMMD^2$. 
Since with only $n$ observations assuming $d>n$ seems unrealistic, the overall computational complexity is of order $\mathcal{O}(B n^2)$.
Note that the MMD quantile estimation proposed in \cite{Kernel2sample} involves the computation of the eigenvalue decomposition of $n\times n$ matrices, which has a complexity bounded by $\mathcal{O}(n^3 + (n \log^2(n)) \log(b))$, where the precision is of order $2^{-b}$ \cite{MtxEvComplexity}.
Then our strategy is preferable as long as $n$ is large enough with respect to $B$, which is illustrated by Figure~\ref{Exectime}.

\section{Theoretical assessment}
\label{sec.theoretical.assessment}

\subsection{Type-I error}
The estimator of $q_{\alpha}$ defined by \eqref{exp.quantile.estimator} depends on the $\ell$-th ordered statistic $n \hLMMD^2_{(\ell)}$, where $\ell = \lfloor B+2-\alpha (B+1) \rfloor$.
The purpose of the following result is to justify this somewhat unintuitive choice for $\ell$ by considering the Type-I error of the resulting procedure.
\begin{proposition}{(Type-I error)}\\
\label{PropTypeIerr}
Assume $P = P_0$ and $\alpha \geq 1/(B + 1)$. 
With $\hat{q}_{\alpha,n}$ given by \eqref{exp.quantile.estimator}, it comes
\begin{equation}
\alpha - \frac{1}{B + 1} \leq \mathbb{P}(n \hLMMD^2 > \hat{q}_{\alpha, n}) \leq \alpha \ens .
\end{equation}
\end{proposition}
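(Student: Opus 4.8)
The plan is to exploit the \emph{exchangeability} of the data-based statistic together with its Monte-Carlo copies under $H_0$, and to reduce the Type-I error to the tail of a discrete uniform law.

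First I would set $T := n\hLMMD^2$ for the statistic built on the real sample and $T_1,\dots,T_B$ for the \emph{unordered} Monte-Carlo copies, so that $n\hLMMD^2_{(1)}\leq\dots\leq n\hLMMD^2_{(B)}$ is their increasing reordering. Under $H_0$ one has $P = P_0$, hence $T$ is distributed like each $T_b$; and since $T,T_1,\dots,T_B$ are moreover mutually independent, the vector $(T,T_1,\dots,T_B)$ is exchangeable. I would also record that the common null law of $n\hLMMD^2$ is non-atomic (it is a non-degenerate function of the underlying Gaussian vector), so that the $B+1$ values are almost surely pairwise distinct. Exchangeability then forces the rank $R := 1 + \#\{1\leq b\leq B:\ T_b < T\}$ of $T$ in $\{T,T_1,\dots,T_B\}$ to be uniform on $\{1,\dots,B+1\}$.

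Next I would translate the rejection event. Because $\hat q_{\alpha,n}=n\hLMMD^2_{(\ell)}$ is the $\ell$-th smallest of the $T_b$'s and no ties occur a.s., one has $\{T>\hat q_{\alpha,n}\} = \{\#\{b:\ T_b<T\}\geq \ell\} = \{R\geq \ell+1\}$, whence
\begin{equation*}
\Prb\big(n\hLMMD^2 > \hat q_{\alpha,n}\big) \;=\; \Prb(R\geq \ell+1)\;=\;\frac{B+1-\ell}{B+1}\ens .
\end{equation*}
It then remains to feed in $\ell=\lfloor B+2-\alpha(B+1)\rfloor$: from $x-1<\lfloor x\rfloor\leq x$ with $x=B+2-\alpha(B+1)$ one gets $\alpha(B+1)-1\leq B+1-\ell<\alpha(B+1)$, and dividing by $B+1$ produces exactly $\alpha - 1/(B+1)\leq \Prb(n\hLMMD^2>\hat q_{\alpha,n})\leq \alpha$. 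The hypothesis $\alpha\geq 1/(B+1)$ is used only to guarantee $\ell\leq B+1$ so that $\hat q_{\alpha,n}$ makes sense — with the convention $n\hLMMD^2_{(B+1)}=+\infty$ in the extreme case $\ell=B+1$, where the rejection probability is $0=\alpha-1/(B+1)$ — while $\alpha\leq 1$ yields $\ell\geq 1$.

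The only step I expect to require care — everything else being bookkeeping with the floor function — is the non-atomicity of the null distribution of $n\hLMMD^2$, which is what makes the rank $R$ \emph{exactly} uniform and hence gives the matching lower bound. Without it, only the inequality $\leq\alpha$ would survive in general; the two-sided bound can always be restored by breaking ties with an auxiliary uniform randomization on $(T,T_1,\dots,T_B)$, which preserves exchangeability.
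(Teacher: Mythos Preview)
Your argument is correct. Both your route and the paper's land on the exact formula $\Prb(n\hLMMD^2>\hat q_{\alpha,n})=(B+1-\ell)/(B+1)$ and then sandwich it using $x-1<\lfloor x\rfloor\leq x$ with $x=B+2-\alpha(B+1)$; the difference is only in how that formula is reached. The paper's sketch conditions on the order statistic and uses that $F_0(n\hLMMD^2_{(\ell)})$ follows a $\mathrm{Beta}(\ell,B-\ell+1)$ law (the ``density of a Beta distribution'' it alludes to), whose mean is $\ell/(B+1)$. You instead invoke exchangeability of $(T,T_1,\dots,T_B)$ under $H_0$ to get the rank of $T$ uniform on $\{1,\dots,B+1\}$, which is arguably more elementary and sidesteps any integration. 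Your explicit handling of the continuity/tie issue and of the boundary case $\ell=B+1$ is a small plus over the paper's sketch, which leaves these implicit.
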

\begin{proof}[Sketch of proof]
The proof is straightforwardly derived from the cumulative function of the order statistic $\hat{q}_{\alpha, n}$, the density of a Beta distribution and the bounds $(1 - \alpha) (B + 1) \leq \l \leq B +2 - \alpha(B + 1)$.
\end{proof}
Note that for a user-specified level $0<\alpha<1$, the L-MMD procedure requires to draw $B\geq 1/\alpha-1$ samples to compute $\hat{q}_{\alpha,n}$.
Besides the upper bound on the Type-I error is tight since the discrepancy between lower and upper bounds is not larger than $1/(B+1)$, which can be made negligible.

\subsection{Type-II error} 
We now assume $P \neq P_0$. Theorem~\ref{PropTypeIIerr} gives the magnitude of the Type-II error, that is the probability of wrongly accepting $H_0$.

Before stating Theorem~\ref{PropTypeIIerr}, let us introduce or recall useful notation.
\begin{itemize}
\item $L = || \bar{\mu}_P - \bar{\mu}_{P_0} ||$
\item $q_{\alpha, n}$ is the $(1 - \alpha)$-quantile of $n \hat{L}^2$ under the null-hypothesis
\item Let $m^{(2)}_P = \Esp_P ||\bar{\phi}(Y) - \bar{\mu}_P ||^2$
\end{itemize}

Since $n \hat{L}^2$ converges weakly to a sum of weighted chi-squares (see \cite{SerflingBook}, p. $194$), $q_{\alpha, n}$ is close to a constant when $n \to +\infty$. $L$ and $m_P^{(2)}$ do not depend on $n$.

The proof for Theorem~\ref{PropTypeIIerr} is provided in Appendix~\ref{proof.prop.t2e}.
\begin{theorem}{(Type II error)}\\
\label{PropTypeIIerr}
Assume $||Y|| \leq M$ ($P$-almost surely) for some $0 < M < +\infty$. \\
Then, for any $n > (q_{\alpha, n} + m_P^{(2)}) L^{-2}$
\begin{align}
\label{TypeIIerrBound}
\mathbb{P}(n \hat{L}^2 \leq \hat{q}_{\alpha, n}) \leq  \exp\left( - \frac{n \left\{L - \sqrt{(q_{\alpha, n} + m_P^{(2)})/(n - 1)} \right\}^2}{f_1(n) + f_2(M, L, n)}    \right) f_3(B, M, L)  \ens ,
\end{align}
where
\begin{align}
f_1(n) & = 2 m_P^{(2)} + \mathcal{O}_n (1 / \sqrt{n}) \notag \\
f_2(M, L, n) & = \left\{\frac{8 \sqrt{2}}{3} L^2 \exp(M^2 / 2) + L \mathcal{O}_n(1 / n)\right\} \left(1+\mathcal{O}_n(1/\sqrt{n})\right) f_1^{1/2}(n) \notag \\
f_3(B, M, L) & = 1 + \frac{3 C_{P_0}}{8 \exp(M^2/2) L^2 \sqrt{2 m_P^{(2)} \alpha B}} + \frac{o_B(1/\sqrt{B})}{\exp(M^2) L^4} \notag \ens ,
\end{align}
where $C_{P_0}$ only depends on $P_0$ and the "$\mathcal{O}_n$" and "$o_B$" terms are idnependent of $L$ and $M$.
\label{MainProposition}
\end{theorem}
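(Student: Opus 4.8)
## Proof proposal for Theorem~\ref{PropTypeIIerr}

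The plan is to control $\mathbb{P}(n\hat L^2 \le \hat q_{\alpha,n})$ by first replacing the random estimated quantile $\hat q_{\alpha,n}$ by a deterministic level, then applying a concentration inequality to $\hat L^2$ around its mean $L^2$. More precisely, I would write
\[
\mathbb{P}(n\hat L^2 \le \hat q_{\alpha,n}) \le \mathbb{P}(n\hat L^2 \le t) + \mathbb{P}(\hat q_{\alpha,n} \ge t)
\]
for a well-chosen threshold $t$, and optimize over $t$ at the end. The first term is an \emph{upper-tail} event for the event $\{\hat L^2 \text{ small}\}$, i.e. a lower-deviation estimate for $\hat L^2$; the second term measures how far the Monte-Carlo quantile estimate can exceed its target under $H_0$.

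First I would analyze $n\hat L^2$ as a (one-sample) $V$-statistic / $U$-statistic in the embedded variables $\bar\phi(Y_i)=e^{<Y_i,\cdot>}\in H(\bar k)$. Writing $\hat{\bar\mu}_P = \frac1n\sum_i \bar\phi(Y_i)$, we have $\hat L^2 = \|\hat{\bar\mu}_P - \bar\mu_{P_0}\|^2$ up to the $U$-statistic bias correction built into \eqref{gofstat}, and $\Esp\hat L^2 = L^2$ by Proposition~\ref{prop.def.L}. Under the boundedness assumption $\|Y\|\le M$ $P$-a.s., the embedded variable satisfies $\|\bar\phi(Y)\|_{H(\bar k)}\le e^{M^2/2}$ $P$-a.s. (since $\|\bar\phi(Y)\|^2 = \bar k(Y,Y) = e^{\|Y\|^2}$), so $\hat{\bar\mu}_P$ is a bounded-increment average in a Hilbert space. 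I would then apply a Bernstein-type inequality for Hilbert-space-valued averages (or equivalently for the bounded $U$-statistic kernel $h(y,y') = <\bar\phi(y)-\bar\mu_{P_0},\,\bar\phi(y')-\bar\mu_{P_0}>$) to get, for $n\hat L^2$ concentrating below $nL^2$, a bound of the form $\exp\!\big(-n u^2/(2\sigma^2 + c\,u)\big)$ where the variance proxy $\sigma^2$ is governed by $m_P^{(2)} = \Esp_P\|\bar\phi(Y)-\bar\mu_P\|^2$ and the scale $c$ by $e^{M^2/2}L$. Matching this against the claimed denominator $f_1(n)+f_2(M,L,n)$: $f_1(n)=2m_P^{(2)}+\mathcal O_n(1/\sqrt n)$ is exactly the variance term (the $\mathcal O_n(1/\sqrt n)$ absorbs the $U$- vs $V$-statistic discrepancy and the centering at $\bar\mu_{P_0}$ rather than $\bar\mu_P$), and $f_2$ is the Bernstein linear-in-deviation correction once the optimal deviation $u \approx L - \sqrt{(q_{\alpha,n}+m_P^{(2)})/(n-1)}$ is substituted — this is why that particular square appears in the exponent numerator and why one needs $n > (q_{\alpha,n}+m_P^{(2)})L^{-2}$ for the deviation to be positive.

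Next I would handle $\mathbb{P}(\hat q_{\alpha,n}\ge t)$. Here I would choose $t = q_{\alpha,n} + m_P^{(2)}$ (this is exactly the constant appearing inside the square root), exploit that $\hat q_{\alpha,n}$ is the $\ell$-th order statistic of $B$ i.i.d.\ copies of $n\hat L^2$ under $H_0$ whose true $(1-\alpha)$-quantile is $q_{\alpha,n}$, and use a one-sided DKW / binomial tail bound: $\mathbb{P}(\hat q_{\alpha,n} \ge q_{\alpha,n}+s)$ is at most the probability that fewer than $\ell$ of the $B$ copies fall below $q_{\alpha,n}+s$, which by Bernstein on a Binomial is $\le \exp(-\Theta(B s^2))$ once $s$ is scaled by the density of $n\hat L^2$ near its quantile (this is where $C_{P_0}$, depending only on the null law, enters, and where the $\sqrt{\alpha B}$ and $m_P^{(2)}$ appear after converting "probability mass $s$" into "quantile shift $s$" via that density). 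Collecting the two contributions, multiplying out, and factoring the second as a multiplicative correction $f_3(B,M,L)$ on the main exponential term gives the stated bound; the $o_B(1/\sqrt B)$ is the higher-order Binomial-tail remainder.

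The main obstacle will be the second step — controlling the Monte-Carlo quantile fluctuation and, in particular, bookkeeping the constants so that the final expression has the exact advertised shape $f_3 = 1 + \frac{3C_{P_0}}{8e^{M^2/2}L^2\sqrt{2m_P^{(2)}\alpha B}} + \cdots$. This requires (i) a lower bound on the density (or a modulus-of-continuity estimate on the cdf) of $n\hat L^2$ under $H_0$ near its $(1-\alpha)$-quantile, which is delicate because that law is a weighted sum of non-central $\chi^2$'s that depends on $n$ and on $\Sigma$; and (ii) keeping the $\mathcal O_n$ and $o_B$ remainders uniform in $L$ and $M$, which forces one to isolate those dependencies explicitly throughout the Bernstein bookkeeping rather than hiding them in generic constants. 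The first step (concentration of $\hat L^2$) is, by contrast, routine once boundedness of $\bar\phi(Y)$ is noted.
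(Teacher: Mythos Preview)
Your two ingredients --- a Bernstein/Bennett bound for the deviation of $n\hat L^2$ below $nL^2$, and a control of the fluctuation of the Monte-Carlo quantile $\hat q_{\alpha,n}$ --- are the right ones, but the way you combine them is genuinely different from the paper and will not reproduce the stated multiplicative form.

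For the concentration step the paper does not use a generic Hilbert-space or $U$-statistic Bernstein bound. It writes the Hoeffding decomposition $n\hat L^2 = n\hat L_P^2 + nL^2 + 2nS_n$, uses the identity
\[
\hat L_P^2 \;=\; \tfrac{n}{n-1}\,\|\hat{\bar\mu}_P-\bar\mu_P\|^2 \;-\; \tfrac{1}{n(n-1)}\sum_{i=1}^n\|\bar\phi(Y_i)-\bar\mu_P\|^2
\]
and drops the first (nonnegative) term, so that $\{n\hat L^2\le \hat q\}$ is contained in a one-sided deviation of a \emph{genuine} i.i.d.\ sum $\sum_i f(Y_i)$, to which Bennett is applied. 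This is what pins down the variance proxy $\vartheta^2\approx L^2 m_P^{(2)}$ and the bound $\bar M\approx 4\sqrt2\,e^{M^2/2}L$, hence the exact $f_1,f_2$. Your generic Bernstein would give the same orders but looser constants.

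The more serious divergence is in handling $\hat q_{\alpha,n}$. Your splitting $\mathbb P(n\hat L^2\le\hat q)\le \mathbb P(n\hat L^2\le t)+\mathbb P(\hat q\ge t)$ with a binomial tail on the order statistic yields an \emph{additive} correction depending only on $P_0$; factoring it as $\exp(-n\ldots)\cdot f_3$ would force $f_3-1$ to carry a factor $e^{+n(\ldots)}$, not the clean $O((\alpha B)^{-1/2})$ with $L,M,m_P^{(2)}$ in it. The paper instead conditions on $\hat q$, applies Bennett to obtain a bound $h(\hat s)$ with $\hat s = L^2 - \hat q/n - m_P^{(2)}/(n-1)$, performs a first-order Taylor--Lagrange expansion of $h$ around the deterministic $s = L^2 - q_{\alpha,n}/n - m_P^{(2)}/(n-1)$, and only then takes the expectation over $\hat q$. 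After Cauchy--Schwarz this reduces to $\sqrt{\mathbb E(\hat q-q_{\alpha,n})^2}$, bounded by $O((\alpha B)^{-1/2})$ via an order-statistic variance inequality. Because the Taylor remainder is multiplied by $|g'|\le 3n/(2\bar M\vartheta)$, the constants $L,M,m_P^{(2)}$ enter $f_3$ through $\bar M\vartheta\ge 4\sqrt{2\,m_P^{(2)}}\,e^{M^2/2}L^2$ --- which is exactly the denominator you were worried about recovering. So your anticipated obstacle is real, and the paper avoids it by taking the Taylor-then-average route rather than the union-bound route.
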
 
The upper bound in \eqref{TypeIIerrBound} shows an exponential decrease for the Type-II error when $n$ grows. Furthermore, it reflects the expected behaviour of the Type-II error with respect to meaningful quantities 
\begin{itemize}
\item When $L$ decreases, the bound increases which is relevant as the alternative becomes more difficult to detect,
\item When $M$ gets smaller, the departure between $P_0$ and $P$ is widened and as a result the upper bound decreases,
\item When $\alpha$ (Type-I error) decreases, $q_{\alpha, n}$ gets larger and so does the bound.
\end{itemize}

Remark that the assumption $||Y|| \leq M$ $P$-a.s. is fulfilled if a bounded kernel $k$ is considered.

\section{Experiments}
\label{sec.experimental.results}

\subsection{Type-I/II errors study}
\label{ssc.t12e.study}

Empirical performances of L-MMD are inferred on the basis of synthetic data. 
L-MMD is compared with two other procedures: Random Projection (Section~\ref{RndProjTest}) and the asymptotic version of L-MMD denoted "L-MMDa".

We set $\mathcal{X} = \mathbb{R}^d$ and $k = <.,.>_{\mathbb{R}^d}$ (where $d = 25$) and thus $H(k)$ is reduced to $\mathbb{R}^d$. Therefore, L-MMD is used as a multivariate normality test and synthetic data are $d$-dimensional observations drawn from a multivariate Gaussian distribution $\mathcal{N}(\mu, \Sigma)$. 

To control the difficulty level in the experiments, we introduced two parameters $\delta,\lambda\geq 0$ such that
$\mu = \delta \cdot (1, 1/2, \ldots, 1/d)'$, and $ \Sigma = \lambda \cdot \mathrm{diag}(1, 1/4, \dots, 1/d^{2})$, where $\mathrm{diag}(u)$ denotes the diagonal matrix with diagonal equal to $u\in\R^d$. 
For the Random Projection test, data are projected onto a randomly chosen direction generated from a zero-mean Gaussian distribution of covariance $\mathrm{diag}(1, \ldots, d^{-2})$.
\begin{figure*}[!b]
\vskip 0.2in
\begin{center}
\centerline{
\includegraphics[width=\textwidth]{./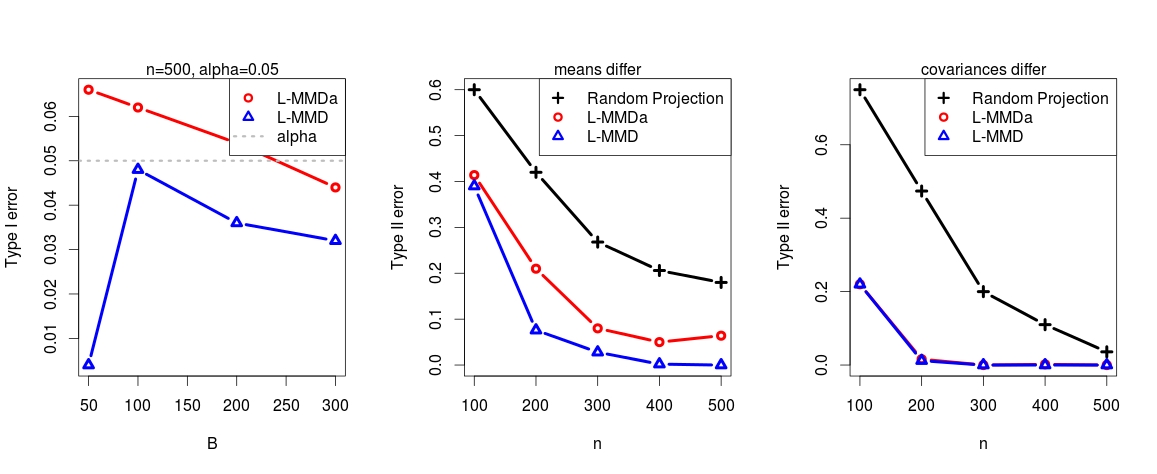}
}
\caption{\textbf{Left:} Type-I errors of the L-MMDa ($\bullet$ red) and L-MMD ($\Delta$ blue) tests. 
\textbf{Center-Right:} Type-II errors of the Random Projection ($+$ black), L-MMDa ($\bullet$ red) and L-MMD ($\Delta$ blue) tests. 
Center: The null-distribution and alternative means differ. 
Right: The null-distribution and the alternative covariances differ.
A theoretical prevision of the L-MMD Type-II error is also plotted (dashed purple).}
\label{T12E_knownparam}
\end{center}
\vskip -0.2in
\end{figure*}

\subsubsection{Type-I error}\label{subsec.typeI.error}

The left panel of Figure \ref{T12E_knownparam} displays the Type-I error of L-MMDa and L-MMD with respect to $B$.
Indeed $B$ independent samples from the asymptotic distribution of $n \hLMMD^2$ have been drawn to allow the comparison between L-MMDa and L-MMD.
Random Projection is not included since it does not depend on $B$ samples. 
Observations are generated from the null-distribution with $\delta_0 = 0$ and $\lambda_0 = 0.5$. 
The test level is $\alpha = 0.05$ and $n = 500$. 
$B$ ranges from $50$ to $300$. 
$500$ simulations are performed for each $B$ and each test.

The Type-I error of L-MMDa is always larger than that of L-MMD although the gap between them remains small ($\leq 0.01$) for $B \geq 100$. 
L-MMD always remains below the prescribed test level $\alpha$ unlike L-MMDa for $B\leq 250$.

\subsubsection{Type-II error}
The same $P_0$ (null-distribution) as in Section~\ref{subsec.typeI.error} is used and two alternatives are considered. 
The first one differs from $P_0$ by the mean ($\delta_{A1} = 0.15$ for the alternative). 
The second one has the same mean as $P_0$ but a different covariance ($\lambda_{A2} = 0.75 \lambda_0$).
Results are displayed in the center (different means) and right (different covariances) of Figure~\ref{T12E_knownparam}. 
We also plotted the prevision of the L-MMD performance provided by Theorem~\ref{PropTypeIIerr}.

L-MMDa and L-MMD both outperform Random Projection that shows the worst overall performance. 
As $n$ grows, L-MMD seems more powerful than L-MMDa ($n\geq 200$).
%

%
\subsection{Influence of the dimensionality}\label{subsec.influence.dimension}
One main concern of goodness-of-fit tests is their drastic loss of power as dimensionality increases. 
Empirical evidences (see Table 3 in \cite{SzekelyRizzoENormTest}) prove ongoing multivariate normality tests suffer such deficiencies.
The purpose of the present section is to check if the good behavior of L-MMD (observed in Section~\ref{ssc.t12e.study} when $d=25$) stills holds in high or infinite dimension.

In Section~\ref{subsubsec.finite.dimension}, two different settings ($d=2$ and $d=25$) are explored with synthetic data where the L-MMD performance is compared with that of two goodness-of-fit tests (Henze-Zirkler and Energy Distance).
Real data serve as infinite dimensional setting in Section~\ref{RealDataPowerStudy} to assess the L-MMD power.

\subsubsection{Finite-dimensional case (Synthetic data)}
\label{subsubsec.finite.dimension}
The power of our test is compared with that of two multivariate normality tests: the HZ test \cite{HenzeZirkler} and the energy distance test \cite{SzekelyRizzoENormTest}. 
In what follows, we briefly recall the main idea of these tests.

The HZ test relies on the following statistic
\begin{equation}
HZ = \int_{\Reals^d} \abs{ \hat{\Psi}(t) - \Psi(t) }^2 \omega(t) dt \ens ,
\end{equation}
where $\Psi(t)$ denotes the characteristic function of $P_0$, $\hat{\Psi}(t) = n^{-1} \sum_{j = 1}^n e^{i <t, Y_j>}$ is the empirical characteristic function of the sample $Y_1, \dots, Y_n$, and $\omega(t) = (2 \pi \beta)^{-d/2} \exp(-||t||^2/(2 \beta))$ with $\beta = 2^{-1/2} [(2d + 1) n)/4]^{1/(d+4)}$. The $H_0$-hypothesis is rejected for large values of $HZ$.
\begin{figure}[!t]
\vskip 0.2in
\begin{center}
\centerline{
\includegraphics[width=\columnwidth]{./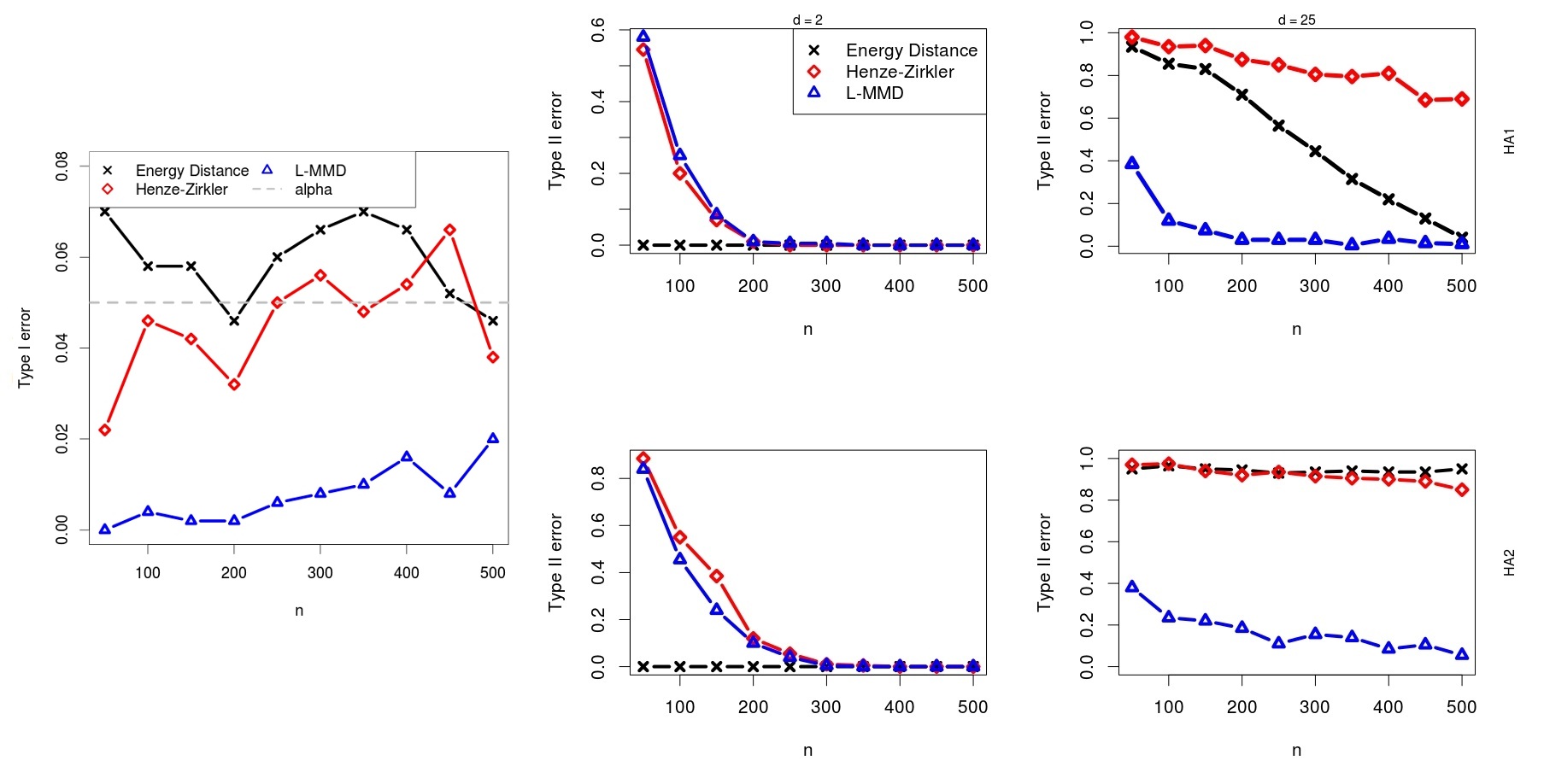}
}
\caption{Type-I and type-II errors of L-MMD ($\Delta$ blue), Energy Distance ($\times$ black), and Henze-Zirkler ($\bullet$ red). For the Type-II error, two alternative distributions are considered: HA1 (top panel) and HA2 (bottom panel). Two settings are considered: $d=2$ (left) and $d = 25$ (right).}
\label{T2E_FinDim}
\end{center}
\vskip -0.2in
\end{figure} 

The energy distance (ED) test is based on 
\begin{equation}
\mathcal{E}(P, P_0) = 2 \Esp ||Y - Z||^2 - \Esp ||Y - Y^{\prime}||^2 - \Esp ||Z - Z^{\prime}||^2 \ens 
\end{equation}
which is called the \emph{energy distance}, where $Y, Y^{\prime} \sim P$ and $Z, Z^{\prime} \sim P_0$. 
Note that $\mathcal{E}(P, P_0) = 0$ if and only if $P = P_0$.
The test statistic is given by
\begin{align}
\hat{\mathcal{E}} = & \frac{2}{n} \sum_{i=1}^n \Esp_Z || Y_i - Z ||^2 - \Esp_{Z, Z^{\prime}} ||Z - Z^{\prime}||^2 \notag \\
         & \hspace*{2cm} -\frac{1}{n^2} \sum_{i, j = 1}^n ||Y_i - Y_j||^2 \hfill \ens ,
\end{align}
where $Z, Z^{\prime} \stackrel{\iid}{\sim} P_0$ (null-distribution).
HZ and ED tests set the $H_0$-distribution at $P_0 = \mathcal{N}(\hat{\mu}, \hat{\Sigma})$ where $\hat{\mu}$ and $\hat{\Sigma}$ are respectively the standard empirical mean and covariance. Therefore, we consider the same null-hypothesis for the L-MMD.

Two alternatives are considered. A mixture of two Gaussians with different means ($\mu_1 = 0$ and $\mu_2 = 1.5 \enspace (1, 1/2, \ldots, 1/d)$) and same covariance $\Sigma = 0.5 \enspace \mathrm{diag}(1, 1/4, \ldots, 1/d^2)$, whose mixture proportions equals either $(0.5, 0.5)$ (alternative HA1) or $(0.8, 0.2)$ (alternative HA2).

$200$ simulations are performed for each test, each alternative and each $n$ (ranging from $100$ to $500$). $B$ is set at $B = 250$ for L-MMD. 

The test level is set at $\alpha = 0.05$ for all tests. Since empirical parameters are considered in all tests, the actual Type-I error may not be controlled anymore. The left plot in Figure \ref{T2E_FinDim} confirms that the actual Type-I error for HZ and ED tests remain more or less around $\alpha$ ($\pm 0.03$). The Type-I error for L-MMD is still upper bounded by $\alpha$ and gets closer to the prescribed test level as $n$ increases.

As for the Type-II error, experimental results (Figure \ref{T2E_FinDim}) reveal two different behaviors as $d$ increases (from center to right columns). 
Whereas both HZ and ED tests lose power, L-MMD still exhibits similar Type-II error values. 
The same conclusion holds true under HA1 and HA2 as well, while the failure of HZ and ED is stronger with HA2 (more difficult). 
This confirms that HZ and ED tests are not suited to high-dimensional settings unlike L-MMD.
\begin{figure}[!b]
\vskip 0.2in
\begin{center}
\centerline{
\includegraphics[width=0.75\columnwidth]{./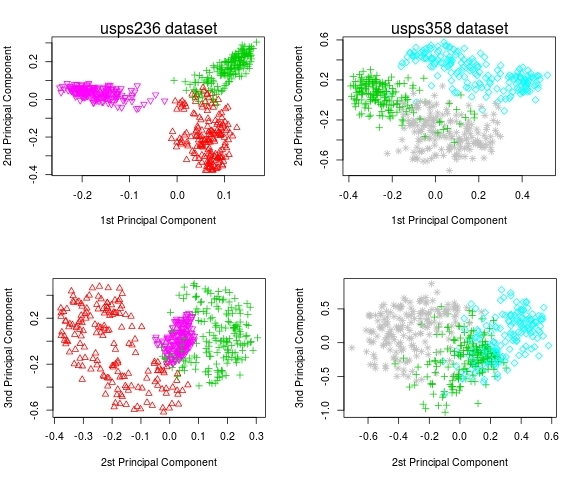}
}
\caption{3D-Visualization (Kernel PCA) of the "Usps236" (left) and "Usps358" (right) datasets}
\label{KPCA_RealD}
\end{center}
\vskip -0.2in
\end{figure} 
Notice that when $d$ is small, L-MMD and HZ have almost the same Type-II error. 
This can be due to the integration involved in the HZ statistic. 
As $d$ increases any discrepancy arising in only a few dimensions is neglected in front of the leading behavior in all other directions.
On the contrary, the supremum at the core of L-MMD \eqref{LMMDGap2} takes into account this kind of discrepancy.

\subsubsection{Infinite-dimensional case (real data)}
\label{RealDataPowerStudy}

Let us consider the USPS dataset (UCI machine learning repository: \textit{http://archive.ics.uci.edu}), which consists of handwritten digits, split up into 10 classes (each for a digit). Each observation represents a $8 \times 8$ greyscale matrix as a $64$-dimensional vector.
A Gaussian kernel $k_G(\cdot,\cdot) = \exp(-\sigma^2 ||\cdot\,-\,\cdot ||^2)$ is used with $\sigma^2 = 10^{-4}$.
Data are visualized through a Kernel PCA \cite{KernelPCA} and displayed in Figure \ref{KPCA_RealD}. 
\begin{figure}[!t]
\vskip 0.2in
\begin{center}
\centerline{
\includegraphics[width=0.75\columnwidth]{./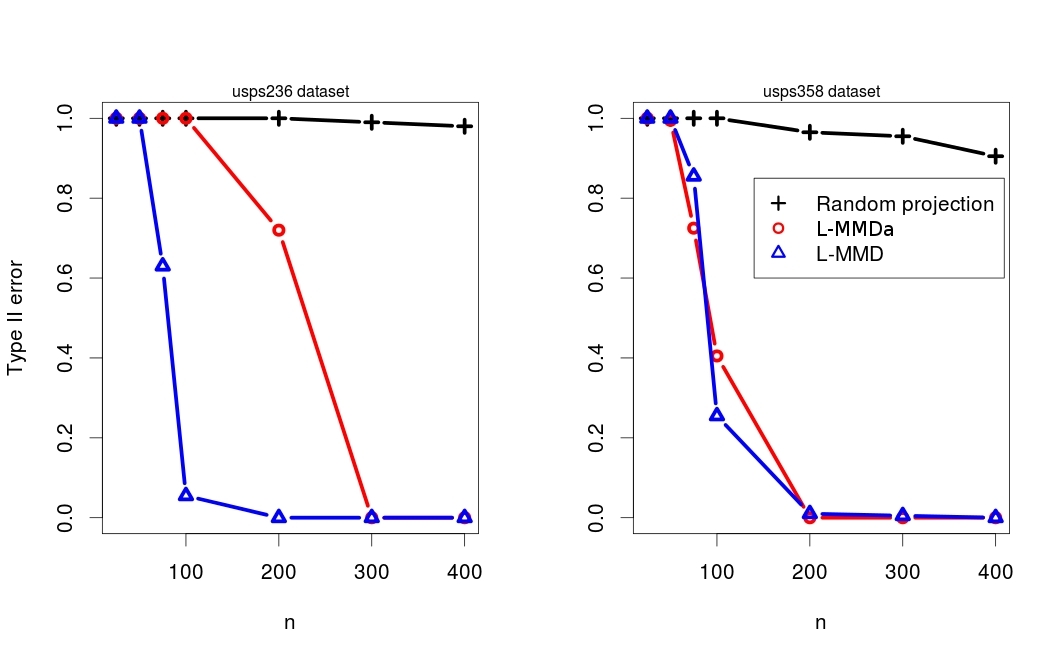}
}
\caption{Comparison of Type-II error for: L-MMD ($\Delta$ blue), L-MMDa ( $\bullet$ red) and Random Projection ( $+$ black). Left: "Usps236". Right: "Usps358".\label{TypeTwoErrRealD}}
\end{center}
\vskip -0.2in
\end{figure} 
Comparing sub-datasets "Usps236" (keeping the three classes "$2$", "$3$" adn "$6$", $541$ observations) and "Usps358" (classes "3", "5" and "8", $539$ observations), the 3D-visualization suggests
three well-separated Gaussian components for ``Usps236'' (left panels), and more overlapping classes for ``Usps358'' (right panels).
Therefore from these two non-Gaussian settings, the last one seems more difficult to detect.

As in Section~\ref{ssc.t12e.study} our test is compared with Random Projection (RP) and L-MMDa tests, specially designed for infinite-dimensional settings. 
For RP, a univariate Kolmogorov-Smirnov test is performed from the projection onto a randomly chosen direction generated by a zero-mean Gaussian process of covariance $k_G$. 
The test level $\alpha = 0.05$ and 100 repetitions have been done for each sample size.  

Results in Figure~\ref{TypeTwoErrRealD} match those obtained in the finite-dimensional case (Figure~\ref{T2E_FinDim}).
On the one hand, RP is by far less powerful than L-MMDa and L-MMD in both cases.
Its Type-II error remains close to 1 whereas L-MMD always truly rejects $H_0$ for $n\geq 200$.
On the other hand, L-MMD seems more powerful than L-MMDa with "Usps236" since it is close to 0 for $n\geq 100$ while L-MMDa reaches similar values only for $n\geq 300$.
However both L-MMDa and L-MMD exhibit a similar behavior in terms of Type-II error with ``Usps358'', and always reject $H_0$ for $n \geq 200$.
This may be due to the higher difficulty of this dataset that do not allow to clearly distinguish between test procedures.
\begin{figure}[!t]
\vskip 0.2in
\begin{center}
\centerline{
\includegraphics[width=0.5 \columnwidth]{./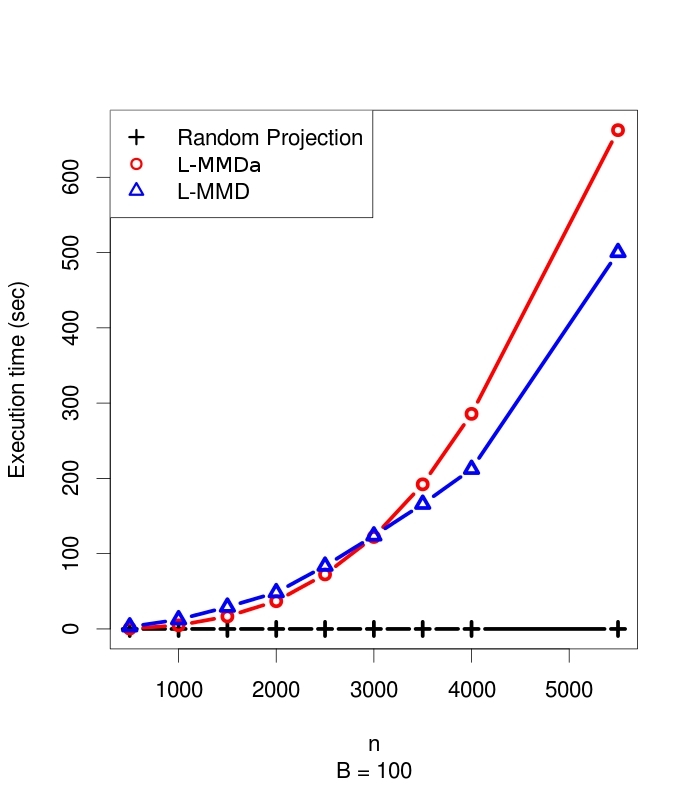}
}
\caption{Execution time of L-MMD ($\Delta$ blue), L-MMDa ( $\bullet$ red) and Random Projection ( $+$ black).\label{Exectime}}
\end{center}
\vskip -0.2in
\end{figure}

\subsection{Execution Time}
\label{ssc.exec.time}
From Sections~\ref{ssc.t12e.study} and~\ref{subsec.influence.dimension} L-MMD is shown to outperform L-MMDa in terms of power. 
This may result from the asymptotic approximation underlying the L-MMDa procedure, while the L-MMD test is performed with the non-asymptotic distribution.
The present section aims at verifying this gain in performance is not balanced by a larger computation time.

From the remark at the end of Section~\ref{sssc.qtl.estim}, L-MMD seems less computationally demanding than L-MMDa as long as $n$ is large enough with respect to $B$.
We carried out an experiment with synthetic data where $B=100$ and $n$ ranges from $500$ to $5500$. No parallelization has been made in this experiment. 
From Figure~\ref{Exectime} results support the above conclusion.  
For $n \leq 3000$, L-MMDa and L-MMD have similar computation time, L-MMDa being only slightly faster. 
However $n>3000$ illustrates the predicted phenomenon. L-MMD is significantly less time consuming than L-MMDa. 
Since the L-MMDa execution time is of order $\mathcal{O}(n^3)$, the L-MMD complexity of order $\mathcal{O}(B n^2)$ becomes smaller as the sample size increases. 
\section{Conclusion}
We introduced a new normality test in RKHS. 
It turns out to be more powerful than ongoing high- or infinite-dimensional tests (such as random projection). 
In particular, empirical studies showed a mild sensibility to high-dimensionality for the L-MMD. Therefore L-MMD can be used as a multivariate normality (MVN) test without suffering a loss of power when $d$ gets larger unlike other MVN tests (Henze-Zirkler, Energy-distance). 

An aspect that most goodness-of-fit tests neglect is the estimation of the distribution parameters (here the mean and covariance of a Gaussian distribution). 
Indeed little is known about how much it affects the test performances. %
Adapting our test to this framework would be welcome in future investigations.

\appendix

\section{Proof of Theorem \ref{PropTypeIIerr}}
\label{proof.prop.t2e}

\subsection{Main proof}

The goal is to get an upper bound for the Type-II error 
\begin{align}
\Prb(n \hDn^2 \leq \hat{q} \mid \mathcal{H}_A) \ens . \label{typeIIerror} 
\end{align}
In the following, the feature map from $H(k)$ to $H(\bar{k})$ will be denoted as
\begin{align*}
\bar{\phi} : H(k) \to H(\bar{k}), \ens y \mapsto \bar{k}(y, .) \ens .
\end{align*}
\begin{enumerate}
\item \textbf{Reduce $n \hDn^2$ to a sum of independent terms}

The first step consists in getting a tight upper bound for \eqref{typeIIerror} which involve a sum of independent terms. This will allow the use of a Bennett concentration inequality in the next step.\\
$n \hDn^2$ is expanded as follows
\begin{align}
n \hDn^2 = & \frac{1}{n-1} \sum_{i \neq j}^n < \bar{\phi}(Y_i) - \bar{\mu}_{P_0},  \bar{\phi}(Y_j) - \bar{\mu}_{P_0}> \notag \\
		:= & n \hDn^2_{P} + n L^2 + 2 n S_n \ens .
		\label{hoeffding.exp}
\end{align}
where $\hat{L}_P^2 = [n(n-1)]^{-1} \sum_{i \neq j}^n < \bar{\phi}(Y_i) - \bar{\mu}_{P},   \bar{\phi}(Y_j) - \bar{\mu}_{P}>$ and $S_n~=~<~\hat{\bar{\mu}}_P - \bar{\mu}_{P}, \bar{\mu}_{P}~-~\bar{\mu}_{P_0}>$ with $\hat{\mu}_P = n^{-1} \sum_{i=1}^n \bar{\phi}(Y_i)$.

It corresponds to the so-called Hoeffding expansion of the U-statistic $\hDn^2$ \cite{HoeffdingUStats} written as a sum of degenerate U-statistics. Since $n \hDn^2_{P}$ converges weakly to a sum of weighted chi-squares and $\sqrt{n} S_n$ to a Gaussian, $\hDn^2_{P}$ becomes negligible with respect to $S_n$ when $n$ is large. Therefore, we consider a surrogate for the Type-II error \eqref{typeIIerror} by removing $\hDn^2_{P}$ with a negligible loss of accuracy.

Using Lemma~\ref{LemmaSplitUStat}, $\hDn^2_{P}$ can be split up into a non-negative quantity and a sum of independent variables
\begin{align}
\hDn^2_{P} = \frac{n}{n-1} ||\hat{\bar{\mu}}_P - \bar{\mu}_P ||^2 - \frac{1}{n(n-1)} \sum_{i=1}^n ||\bar{\phi}(Y_i) - \bar{\mu}_P||^2 \ens .
			  \label{split.L0P2}
\end{align}
Writing \eqref{typeIIerror} conditionally to $\hat{q}$, plugging \eqref{hoeffding.exp} and \eqref{split.L0P2} into \eqref{typeIIerror} and using $||\hat{\bar{\mu}}_P~-~\bar{\mu}_P ||^2 \geq 0$ yield the upper bound

\begin{align}
\Prb(n \hDn^2 \leq \hat{q} \mid \hat{q}) \leq & \ens \Prb\left(- \frac{1}{n-1} \sum_{i=1}^n ||\bar{\phi}(Y_i) - \bar{\mu}_P||^2 + nL^2 + 2n S_n \leq \hat{q} \Vert \hat{q} \right) \label{get.sum.indep} \ens . 
\end{align}
Remark that both positive and negative terms of \eqref{split.L0P2} are of the same order than $\hat{L}^2_P$ (that is of order $n^{-1}$) so that the loss of accuracy in the bound \eqref{get.sum.indep} is negligeable.
\begin{align}
\Prb(n \hDn^2 \leq \hat{q} \mid \hat{q}) \ens \leq \ens  \Prb(\sum_{i=1}^n f(Y_i) \geq n \hat{s} \mid \hat{q}) \label{typeIIerr.first.bound} \ens ,
\end{align}
where 
\begin{align}
f(Y_i) := \ens  \frac{||\bar{\phi}(Y_i) - \bar{\mu}_P||^2}{n-1} - 2<\bar{\phi}(Y_i) - \bar{\mu}_{P}, \bar{\mu}_{P} - \bar{\mu}_{P_0}> \ens , \quad 
\hat{s}	:= \ens L^2 - \frac{\hat{q}}{n} - \frac{m_P^{(2)}}{n-1} \ens , \notag
\end{align}
and $m_P^{(i)} = \Esp ||\bar{\phi(Y_i)} - \bar{\mu}_P||^i$ for any $i \geq 2$.\\
\item \textbf{Apply a concentration inequality}

We now want to find an upper bound for \eqref{typeIIerr.first.bound} through a concentration inequality, namely Lemma~\ref{LemmaBennett} with $\xi_i = f(Y_i)$, $\epsilon = n \hat{s}$, $\nu^2 = \mathrm{Var}(f(Y_i))$ and $f(Y_i) \leq c = \bar{M}$ ($P$-almost surely). 

Lemma~\ref{LemmaBennett} combined with Lemma~\ref{LemmaVariance} and \ref{LemmaBound} yields the upper bound
\begin{align}
\Prb(\sum_{i=1}^n f(Y_i) \geq n \hat{s} \mid  \hat{q}) \leq & \exp\left( - \frac{n \hat{s}^2}{2 \vartheta^2 + (2/3) \overline{M} \vartheta \hat{s}}  \right) \indc_{\hat{s} \geq 0} +  \indc_{\hat{s} < 0} \notag \\
	:= & \exp(g(\hat{s})) \indc_{\hat{s} \geq 0} +  \indc_{\hat{s} < 0} := h(\hat{s}) \label{BennettBound} \ens ,
\end{align}
where
\begin{align}
\overline{M} := \left(4 \sqrt{2} e^{M^2 / 2} L + \frac{m_P^{(2)}}{n - 1}  \right) \ens , \qquad \vartheta^2 := \ens L^2 m_P^{(2)} + \frac{L m^{(3)}_P}{n-1}  + \frac{m_P^{(4)} - (m^{(2)}_P)^2}{4 (n-1)^2} \ens . \notag
\end{align}
\item \textbf{"Replace" the estimator $\hat{q}_{\alpha, n}$ with the true quantile $q_{\alpha, n}$ in the bound}

It remains to take the expectation with respect to $\hat{q}_{\alpha, n}$. In order to make it easy, $\hat{q}_{\alpha, n}$ is pull out of the exponential term of the bound. This is done through a Taylor-Lagrange expansion (Lemma~\ref{LemmaTaylorLag}).

Lemma~\ref{LemmaTaylorLag} rewrites the bound in \eqref{BennettBound} as
\begin{align}
\exp\left( - \frac{n s^2}{2 \vartheta^2 + (2/3) \overline{M} \vartheta  s}  \right) \left\{1 + \frac{3 n }{2 \overline{M} \vartheta} \exp\left( \frac{3 |\tilde{q} - q|}{2 \overline{M} \vartheta }  \right) \indc_{\tilde{s} \geq 0} |\hat{s} - s| \right\} \ens , \label{TaylorLag}
\end{align}
where 
\begin{align}
s = L^2 - \frc{q}{n} - \frc{ b^{(2)}_P}{n-1} \ens , \quad \tilde{s} = L^2 - \frc{\tilde{q}}{n} - \frc{ b^{(2)}_P}{n-1} \ens , \quad \tilde{q} \in (q \wedge \hat{q}, q \vee \hat{q}) \ens \notag,
\end{align}
and $s \geq 0$ because of the assumption $n > (q + m_P^{(2)}) L^{-2}$.

The mean (with respect to $\hat{q}$) of the right-side multiplicative term of \eqref{TaylorLag} is bounded by
\begin{align}
1 + \frac{3 n }{2 \overline{M} \vartheta } \left\{\Esp_{\hat{q}}\left(\exp\left( \frac{3 |\tilde{q} - q|}{\overline{M} \vartheta }  \right) \indc_{\tilde{s} \geq 0} \right)\right\}^{1/2} \sqrt{\Esp_{\hat{q}} (\hat{s} - s)^2} \ens , \notag 
\end{align}

because of the Cauchy-Schwarz inequality.\\
On one hand, using $\hat{q} \to q$ $P_0$-a.s. when $B \to +\infty$
\begin{align}
\Esp_{\hat{q}}\left(\exp\left( \frac{3 |\tilde{q} - q|}{\overline{M} \vartheta }  \right) \indc_{\tilde{s} \geq 0} \right) = & \ens \Esp_{\hat{q}} \left(\left[1 + \frac{o_B(|\hat{q} - q|)}{\overline{M} \vartheta} \right] \indc_{\tilde{s} \geq 0} \right) \notag \\
	\leq & \ens 1 + \frac{\Esp_{\hat{q}} (o_B(|\hat{q} - q|) \indc_{\tilde{s} \geq 0})}{\overline{M} \vartheta} = \ens 1 + \frac{o_B(1)}{\overline{M} \vartheta } \ens ,
	\label{apply.dom.conv.th}
\end{align}
which follows from the Dominated Convergence Theorem (since the variable $|\tilde{q} - q| \indc_{\tilde{s} \geq 0}$ is bounded by the constant $|n L^2 - q| \vee |q|$ for every $B$).\\
On the other hand, Lemma~\ref{LemmaGapQtl} provides
\begin{align}
\Esp (\hat{s} - s)^2 = \frac{\Esp (\hat{q}-q)^2}{n^2} \leq \frac{C_{1, P_0} + \alpha C_{2, P_0} / B }{n^2 \alpha B} \leq \frac{C_{P_0}}{n^2 \alpha B} \ens . 
\end{align}
so that an upper bound for the Type-II error is given by
\begin{align}
& \exp\left( - \frac{n s^2}{2 \vartheta^2 + (2/3) \overline{M} \vartheta s}  \right) \left\{ 1 +  \frac{3 C_{P_0}}{2 \overline{M} \vartheta \sqrt{\alpha B}} +  \frac{o_B(B^{-1/2})}{\overline{M}^2 \vartheta^2}  \right\} \ens . \label{final.upper.bound} 
\end{align}
Finally \eqref{final.upper.bound} can be bounded via the inequalities $n > (q + m_P^{(2)})/L^2$ and $\overline{M} \vartheta \geq 4 \sqrt{2 m_P^{(2)}} \exp(M^2/2) L^2$
\begin{align}
\exp\left( - \frac{n \left[L - \sqrt{(q + m_P^{(2)})/(n - 1)} \right]^2}{f_1(n) + f_2(M, L, n)}    \right) f_3(B, M, L) \ens ,\notag 
\end{align}
where
\begin{align}
f_1(n) & = 2 m_P^{(2)} + \mathcal{O}_n (1 / \sqrt{n}) \notag \\
f_2(M, L, n) & = \left\{\frac{8 \sqrt{2}}{3} L^2 \exp(M^2 / 2) + L \mathcal{O}_n(1 / n)\right\} \left(1+\mathcal{O}_n(1/\sqrt{n})\right) f_1^{1/2}(n) \notag \\
f_3(B, M, L) & = 1 + \frac{3 C_{P_0}}{8 \exp(M^2/2) L^2 \sqrt{2 m_P^{(2)} \alpha B}} + \frac{o_B(1/\sqrt{B})}{\exp(M^2) L^4} \notag \ens .
\end{align}
\end{enumerate}

Theorem~\ref{PropTypeIIerr} is proved.

\subsection{Auxilary results}

\begin{lemma}{(Bennett's inequality, Theorem 2.9 in \cite{Book_ConcIneq})}
\label{LemmaBennett}
Let $\xi_1, \ldots, \xi_n$ i.i.d. zero-mean variables bounded by $c$ and of variance $\nu^2$.\\
Then, for any $\epsilon > 0$
\begin{align}
\Prb\left(\sum_{i=1}^n \xi_i \geq \epsilon \right) \leq \exp\left( - \frac{\epsilon^2}{2 n \nu^2 + 2 c \nu \epsilon / 3}  \right) \ens .
\end{align}
\end{lemma}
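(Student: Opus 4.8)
The plan is to prove the inequality by the classical Cramer--Chernoff (exponential moment) method, so the argument is entirely standard. First I would fix $\lambda>0$, apply Markov's inequality to $e^{\lambda\sum_i\xi_i}$, and use the independence of the $\xi_i$ to obtain $\Prb\paren{\sum_{i=1}^n\xi_i\geq\epsilon}\leq e^{-\lambda\epsilon}\,\prod_{i=1}^n\Esp e^{\lambda\xi_i}$. This reduces the problem to a bound on the moment generating function of a single $\xi_i$, after which only the choice of $\lambda$ remains.

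The key step is the one-variable estimate $\Esp e^{\lambda\xi_i}\leq\exp\paren{\tfrac{\nu^2}{c^2}(e^{\lambda c}-1-\lambda c)}$. To get it I would use that $x\mapsto x^{-2}(e^{\lambda x}-1-\lambda x)$ is nondecreasing on $\Reals$ (all coefficients of its power series being nonnegative), so that $e^{\lambda x}\leq 1+\lambda x+c^{-2}(e^{\lambda c}-1-\lambda c)\,x^2$ for every $x\leq c$; taking expectations, using $\Esp\xi_i=0$ and $\Esp\xi_i^2=\nu^2$, and then $1+t\leq e^t$, gives the claim. Substituting back yields, for all $\lambda>0$, the bound $\Prb\paren{\sum_i\xi_i\geq\epsilon}\leq\exp\paren{-\lambda\epsilon+\tfrac{n\nu^2}{c^2}(e^{\lambda c}-1-\lambda c)}$.

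Next I would optimise in $\lambda$: the exponent is convex and is minimised at $\lambda^\star=c^{-1}\log\paren{1+c\epsilon/(n\nu^2)}$, which produces Bennett's inequality in sharp form, namely $\Prb\paren{\sum_i\xi_i\geq\epsilon}\leq\exp\paren{-\tfrac{n\nu^2}{c^2}\,h\paren{\tfrac{c\epsilon}{n\nu^2}}}$ with $h(u)=(1+u)\log(1+u)-u$. Finally I would pass to the announced Bernstein-type form using the elementary scalar inequality $h(u)\geq\dfrac{u^2}{2(1+u/3)}$, valid for $u\geq0$ (both sides and their first derivatives vanish at $0$, and comparing derivatives closes it); with $u=c\epsilon/(n\nu^2)$ this gives $\exp\paren{-\epsilon^2/\paren{2n\nu^2+2c\epsilon/3}}$, which is the stated bound.

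I do not expect a genuine obstacle, the result being textbook (it is quoted from \cite{Book_ConcIneq}); the only two points that need a little care are the monotonicity argument delivering the one-term moment bound with the sharp constant $e^{\lambda c}-1-\lambda c$, and the verification of the scalar inequality $h(u)\geq u^2/(2+2u/3)$ used in the last step. Everything else is routine bookkeeping.
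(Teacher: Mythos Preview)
The paper does not prove this lemma at all: it is merely quoted as Theorem~2.9 of \cite{Book_ConcIneq} and used as a black box in the proof of Theorem~\ref{PropTypeIIerr}. Your Cram\'er--Chernoff argument (moment bound via the monotonicity of $x\mapsto x^{-2}(e^{\lambda x}-1-\lambda x)$, optimisation in $\lambda$, then the scalar inequality $h(u)\geq u^2/(2+2u/3)$) is the standard textbook derivation and is correct.

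One point worth flagging: the bound you actually obtain is
\[
\Prb\Bigl(\sum_{i=1}^n\xi_i\geq\epsilon\Bigr)\leq\exp\Bigl(-\frac{\epsilon^2}{2n\nu^2+2c\epsilon/3}\Bigr),
\]
whereas the lemma as printed in the paper has $2c\nu\epsilon/3$ in the denominator, with an extra factor of $\nu$. Your version is the correct Bernstein form; the paper's extra $\nu$ appears to be a typographical slip (indeed, with the extra $\nu$ the inequality is false in general: a single variable equal to $c$ with small probability $p$ and $-cp/(1-p)$ otherwise already violates it for small $p$). So your claim that your final expression ``is the stated bound'' is not literally accurate, but the discrepancy is on the paper's side, not yours.
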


\begin{lemma}
\label{LemmaGapQtl}
Assume $\alpha < 1/2$. Then,
\begin{equation}
\Esp (\hat{q}_{\alpha, n} - q_{\alpha, n})^2 \leq \frac{C_{1, P_0}}{\alpha B} + \frac{C_{2, P_0}}{B^2} \ens ,
\end{equation}
where $C_{1,P_0}$ and $C_{2,P_0}$ only depends on $P_0$.
\end{lemma}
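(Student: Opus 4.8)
The plan is to pass from the order statistic $\hat q_{\alpha,n}=n\hat L^2_{(\ell)}$ to a uniform order statistic through the quantile transform, and then to exploit the local regularity of the null distribution of $n\hat L^2$. Let $F$ be the cdf of $n\hat L^2$ under $P_0$. Since $n\hat L^2$ is a non-degenerate real-analytic function of the finitely many Gaussian coordinates $G_{i,r}$ (equivalently, its weak limit is an absolutely continuous sum of weighted $\chi^2$'s, and the same argument applies at finite $n$), $F$ admits a continuous density $f$ that is strictly positive on the interior of its support. With $U_1,\dots,U_B$ i.i.d.\ uniform on $[0,1]$ and order statistics $U_{(1)}\le\cdots\le U_{(B)}$, one has $\hat q_{\alpha,n}\overset{d}{=}F^{-1}(U_{(\ell)})$, $q_{\alpha,n}=F^{-1}(1-\alpha)$, and $U_{(\ell)}\sim\mathrm{Beta}(\ell,B+1-\ell)$ with $\Esp U_{(\ell)}=\ell/(B+1)$ and $\mathrm{Var}(U_{(\ell)})=\ell(B+1-\ell)/\big((B+1)^2(B+2)\big)$.

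First I would record the deterministic facts about $\ell=\lfloor B+2-\alpha(B+1)\rfloor$: one has $0<\ell-(1-\alpha)(B+1)\le 1$, hence $|\Esp U_{(\ell)}-(1-\alpha)|\le (B+1)^{-1}$ and $\mathrm{Var}(U_{(\ell)})\le \alpha(1-\alpha)(B+1)^{-1}\big(1+O(B^{-1})\big)$. Because $\alpha<1/2$, the point $1-\alpha$ lies in $(1/2,1)$ and hence in the interior of the support of $F$; so there exist $\eta=\eta_{P_0,\alpha}>0$ and $\kappa=\kappa_{P_0,\alpha}>0$ with $f\big(F^{-1}(u)\big)\ge\kappa$ for all $u\in I:=[1-\alpha-\eta,\,1-\alpha+\eta]$, i.e.\ $F^{-1}$ is $\kappa^{-1}$-Lipschitz on $I$. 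On $\{U_{(\ell)}\in I\}$ this gives
\begin{align*}
\big(\hat q_{\alpha,n}-q_{\alpha,n}\big)^2\,\indc_{\{U_{(\ell)}\in I\}}\;\le\;\kappa^{-2}\,\big(U_{(\ell)}-(1-\alpha)\big)^2 ,
\end{align*}
whose expectation is at most $\kappa^{-2}\big(\mathrm{Var}(U_{(\ell)})+(B+1)^{-2}\big)\le C_{1,P_0}/(\alpha B)+C'/B^2$ by the previous display (using $\alpha^2(1-\alpha)\le 1$).

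It then remains to control the bad event $\{U_{(\ell)}\notin I\}$: by Cauchy--Schwarz its contribution is at most $\sqrt{\Esp\big(\hat q_{\alpha,n}-q_{\alpha,n}\big)^4}\cdot\sqrt{\Prb(U_{(\ell)}\notin I)}$. Since $\Esp U_{(\ell)}$ is within $O(1/B)$ of $1-\alpha$, a Hoeffding/Bernstein bound for $U_{(\ell)}$ (equivalently, a binomial deviation bound, using $\{U_{(\ell)}\le t\}=\{\mathrm{Bin}(B,t)\ge\ell\}$) yields $\Prb(U_{(\ell)}\notin I)\le 2e^{-c_\eta B}=o(B^{-2})$; and the fourth moment of $n\hat L^2_{(\ell)}$ is bounded uniformly in $B$ because $n\hat L^2$ has a finite fourth moment under $P_0$ (ensured by the eigenvalue condition $\lambda<1$, which after the harmless rescaling of Proposition~\ref{prop.def.L} keeps all the relevant $\Esp e^{c\langle Z_i,Z_j\rangle}$ finite for $c$ slightly above $1$) and $\ell/B$ stays bounded away from $1$. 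Hence this term is $\le C_{2,P_0}/B^2$, and adding the two contributions gives the stated bound.

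The main obstacle I foresee is the rigorous justification of the two regularity inputs, uniformly in $B$: (i) that $F$ has a continuous, locally bounded-below density in a neighbourhood of its $(1-\alpha)$-quantile, and (ii) that the fourth moment of $n\hat L^2_{(\ell)}$ — which reaches into the unbounded upper tail of $n\hat L^2$ — stays bounded independently of $B$. Both hold here, but they are exactly where the constants pick up their dependence on $P_0$ (and on the fixed level $\alpha$); everything else is the standard order-statistic/Beta bookkeeping sketched above.
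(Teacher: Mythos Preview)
Your argument is correct in outline and the two regularity inputs you flag are exactly the places where work is needed; modulo those, the bookkeeping goes through. However, your route differs substantially from the paper's. The paper uses the bias--variance decomposition $\Esp(\hat q-q)^2=\mathrm{Var}(\hat q)+(\Esp\hat q-q)^2$ and handles the two pieces separately: the variance is controlled by a ready-made order-statistic inequality of Boucheron--Thomas (bounding $\mathrm{Var}(\hat q)$ by $2(\alpha B)^{-1}\Esp h^{-1}(U_{n,(\ell)})$ with $h$ the hazard rate of $n\hat L^2$), and the bias by identifying $\Esp\hat q$ with the $(\ell+1)/(B+1)$-quantile and Taylor-expanding the limiting quantile function $Q_\infty$ around $1-\alpha$. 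By contrast, you bypass the bias--variance split entirely via the quantile transform $\hat q=F^{-1}(U_{(\ell)})$, a local Lipschitz bound on $F^{-1}$, and a good/bad event decomposition with Beta concentration on the bad set.

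What each buys: the paper's proof is shorter once one quotes the Boucheron--Thomas bound, and it avoids any tail argument for the order statistic; but it imports a nontrivial external result and still needs positivity of the limiting density at $q$ (hidden in $f_\infty(Q_\infty(\xi))$). Your approach is more self-contained---everything reduces to Beta moments and a binomial deviation bound---and in fact yields a main term of order $\alpha/B$ rather than $1/(\alpha B)$, which you then relax to match the stated form. The price is the fourth-moment control of $n\hat L^2_{(\ell)}$ uniformly in $B$, which is true but needs a short tail computation (e.g.\ via $\Prb(X_{(\ell)}>t)\le\Prb(\mathrm{Bin}(B,1-F(t))\ge B-\ell+1)$ and a Chernoff bound, using that $n\hat L^2$ has moments of all orders under $P_0$ when $\lambda<1$). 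Note also that in both proofs the constants tacitly depend on the fixed level $\alpha$ through the density at the $(1-\alpha)$-quantile, despite the notation $C_{i,P_0}$.
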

\begin{proof}{(Lemma~\ref{LemmaGapQtl})}
Let $U_n=n \hat{L}^2_{\mathcal{H}_0, (\ell)}$ (under the null-hypothesis), $U_{n,1}, \ldots, U_{n, B}$ $B$ i.i.d. copies of $U_n$, $U_{n, (1)} < \ldots < U_{n, (B)}$ the associated order statistics and $q$ the $(1-\alpha)$-quantile of $U_n$, that is $\Prb(U_n > q) = \alpha$.\\
Condider $\ell = \lfloor B + 2 - \alpha (B+1) \rfloor$ and $\hat{q} := U_{n, (\ell)}$. $\Esp (\hat{q} - q)^2$ can be split up the following way
\begin{align}
\Esp (\hat{q} - q)^2 = \mathrm{Var} (\hat{q}) + (\Esp \hat{q} - q )^2 \ens . \label{DecomposeQtlGap}
\end{align}
Theorem 2.9. in \cite{BoucheronOrderStat} provides an upper bound for the variance term when $\ell \geq B/2$ (which holds since $\alpha < 1/2$)
\begin{align}
\mathrm{Var} (\hat{q}) \leq \frac{2}{\alpha B} \Esp h^{-1}(U_{n, (\ell)}) \ens , \notag
\end{align}
where $h$ is the hazard rate of $U_n$ defined by $h = f_n / (1 - F_n)$, $F_n$ is the cumulative distribution function of $U_n$ and $f_n = F_n^{'}$.

Since $U_n$ converges weakly to a (possibly infinite) sum of weighted chi-squares (see \cite{SerflingBook}, p. $194$), $\Esp h^{-1}(U_{n, (\ell)})$ converges to a finite quantity as $n \to +\infty$. Therefore, there exists a quantity $C_{P_0}$ which does not depend on $n$ such that
\begin{align}
\mathrm{Var} (\hat{q}) \leq \frac{C_{P_0}}{\alpha B} \ens .
\label{QtlGap.first.bound}
\end{align}
To bound the second additive term in \eqref{DecomposeQtlGap}, we determine which quantile of $U_n$ $\Esp \hat{q}$ corresponds to.
\begin{align}
\Prb ( U_n \leq \Esp \hat{q}) = & \ens \Esp_{\hat{q}} \Prb(U_n \leq \hat{q} | \hat{q}) = \Esp_{\hat{q}} \Esp_{U_n} \indc_{U_n \leq \hat{q}} \notag \\
	= & \ens \Esp_{U_n} \Esp_{\hat{q}} \indc_{U_n \leq \hat{q}} = \Esp_{U_n} \Prb(\hat{q} \geq U_n | U_n)  \notag \ens .
\end{align}
The expression for the cdf of an order statistic yields
\begin{align}
\Prb ( U_n \leq \Esp \hat{q}) = \Esp_{U_n} \left\{ \sum_{k = B-l}^B \binom{B}{k} (1 - F_n(U_n))^k F_n^{B-k}(U_n) \right\} \notag \ens .
\end{align}
Since $F_n(U_n)$ follows a uniform distribution on $[0, 1]$, the density of a Beta law appears in the latter equation, namely
\begin{align}
\Prb ( U_n \leq \Esp \hat{q}) = & \ens  \sum_{k = B-l}^B \binom{B}{k} \int_0^1 (1 - u)^k u^{B-k} \dd u  \notag \\
	= & \ens \sum_{k = B-l}^B \binom{B}{k} \frac{k! (B-k)!}{(B+1)!} = \frac{\ell + 1}{B+1} \notag \ens .
\end{align}
Let $Q_n = F_n^{-1}$ denote the quantile function of $U_n$.  
Since $U_n$ converges to a sum of weighted chi-squares with quantile function $Q_\infty$, one can write $Q_n = Q_\infty (1 + o_n(1))$ (where $o_n(1)$ holds uniformly on the interval $[1 - \alpha, (1 - \alpha) + 2/(\lceil 2/\alpha \rceil + 1)]$). Besides, let $f_\infty = [Q^{-1}_\infty]^{'}$ the density of the limit distribution of $U_n$.

By the Taylor-Lagrange expansion of $Q_\infty$ of order $1$, hence there exists $\xi \in (1-\alpha, (\ell + 1)/(B +1))$ such that 
\begin{align}
|\Esp \hat{q} - q | = & \ens \left|Q_\infty(\frac{\ell+1}{B+1}) - Q_\infty(1-\alpha)\right| (1 + o_n(1)) \notag \\
	= & \ens \left|Q_\infty(1 - \alpha) + Q_\infty^{'}(\xi)\left( \frac{\ell + 1}{B+1} - (1-\alpha)  \right) - Q_\infty(1-\alpha)\right| (1 + o_n(1)) \notag \\
	\leq & \ens \frac{2 (1 + o_n(1)) }{(B+1) f_\infty(Q_\infty(\xi))} = \frac{2 (1 + o_n(1)) (1 + o_B(1))}{(B+1) f_\infty(q)} \leq \frac{C_{2, P_0}}{B} \ens ,  \label{QtlGap.sec.bound}
\end{align}
where $C_{2, P_0}$ only depends on $P_0$. \\
\eqref{DecomposeQtlGap} combined with \eqref{QtlGap.first.bound} and \eqref{QtlGap.sec.bound} yield the wanted bound.

\begin{lemma}
\label{LemmaSplitUStat}
\begin{align}
\hDn^2_{P} = \frac{n}{n-1} ||\hat{\bar{\mu}}_P - \bar{\mu}_P ||^2 - \frac{1}{n(n-1)} \sum_{i=1}^n ||\bar{\phi}(Y_i) - \bar{\mu}_P||^2 \ens .
\end{align}
\end{lemma}
\begin{proof}
\begin{align}
\hDn^2_{P} = & \frac{1}{n(n-1)} \sum_{i \neq j}^n < \bar{\phi}(Y_i) - \bar{\mu}_{P}, \bar{\phi}(Y_j) - \bar{\mu}_{P}> \notag \\
				= & \frac{1}{n(n-1)} \sum_{i, j = 1}^n < \bar{\phi}(Y_i) - \bar{\mu}_{P}, \bar{\phi}(Y_j) - \bar{\mu}_{P}> - \frac{1}{n(n-1)} \sum_{i = 1}^n || \bar{\phi}(Y_i) - \bar{\mu}_{P} ||^2 \notag \\
				= & \frac{n}{n-1} < \frac{1}{n} \sum_{i = 1}^n \bar{\phi}(Y_i) - \bar{\mu}_{P}, \frac{1}{n} \sum_{j = 1}^n \bar{\phi}(Y_j) - \bar{\mu}_{P}> - \frac{1}{n(n-1)} \sum_{i = 1}^n || \bar{\phi}(Y_i) - \bar{\mu}_{P} ||^2 \notag \\
			  = & \frac{n}{n-1} ||\hat{\bar{\mu}}_P - \bar{\mu}_P ||^2 - \frac{1}{n(n-1)} \sum_{i=1}^n ||\bar{\phi}(Y_i) - \bar{\mu}_P||^2 \ens . \notag
\end{align}
\end{proof}

\begin{lemma}
\label{LemmaBound}
If $||Y|| \leq M$ $P$-a.s., then $f(Y)$ is also bounded
\begin{align}
|f(Y)| \leq \overline{M} := 4 \sqrt{2} e^{M^2 / 2} L + \frac{m_P^{(2)}}{n - 1} \ens .
\end{align}

\end{lemma}

\begin{proof}
\begin{align}
|f(Y)| = & \ens \left| 2 < \bar{\phi}(Y) - \bar{\mu}_P, \bar{\mu}_{P} - \bar{\mu}_{P_0} > - \frac{m_P^{(2)}}{n - 1}  \right| \notag \\
			 \leq & \ens  2 ||\bar{\phi}(Y) - \bar{\mu}_P|| L + \frac{m_P^{(2)}}{n - 1}   \notag \\
			 \leq & \ens  2 \sqrt{2} ||\bar{\phi}(Y)|| L + 2 \sqrt{2} ||\bar{\mu}_P|| L + \frac{m_P^{(2)}}{n - 1}   \notag \\
			 = & \ens  2 \sqrt{2} \Esp e^{||Y||^2 / 2} L + 2 \sqrt{2} \Esp e^{<Y, Y^{'}> / 2} L + \frac{m_P^{(2)}}{n - 1}   \notag \\
			 \leq & \ens 4 \sqrt{2} e^{M^2 / 2} L + \frac{m_P^{(2)}}{n - 1}  := \overline{M}   \ens . \notag
\end{align}
\end{proof}

\begin{lemma}
\label{LemmaVariance}
\begin{align}
\nu^2 \leq \vartheta^2 := L^2 m_P^{(2)} + \frac{L m^{(3)}_P}{n-1}  + \frac{m_P^{(4)} - (m^{(2)}_P)^2}{4 (n-1)^2} \ens .
\end{align}
\end{lemma}

\begin{proof}
\begin{align}
\nu^2 := \mathrm{Var}(g(Y)) = & \ens \Esp <\bar{\phi}(Y_i) - \bar{\mu}_P, \bar{\mu}_P - \bar{\mu}_{P_0} >^2 + \frac{\Esp ||\bar{\phi}(Y_i) - \bar{\mu}_P||^4}{4 (n - 1)^2} \notag \\
	& - \frac{\Esp (<\bar{\phi}(Y_i) - \bar{\mu}_P, \bar{\mu}_P - \bar{\mu}_{P_0}> ||\bar{\phi}(Y_i) - \bar{\mu}_P||^2)}{n - 1} - \left[\frac{\Esp ||\bar{\phi}(Y_i) - \bar{\mu}_P||^2}{2 (n - 1)}\right]^2 \notag \\   
	\leq & \ens L^2 m_P^{(2)} + \frac{L m^{(3)}_P}{n-1}  + \frac{m_P^{(4)} - (m^{(2)}_P)^2}{4 (n-1)^2} := \vartheta^2 \ens , \notag
\end{align}
\end{proof}

\begin{lemma}
\label{LemmaTaylorLag}
Let $h$ be defined as in \eqref{BennettBound}. Then,
\begin{align}
h(\hat{s}) \leq \exp\left( - \frac{n s^2}{2 \vartheta^2 + (2/3) \overline{M} \vartheta  s}  \right) \left\{1 + \frac{3 n }{2 \overline{M} \vartheta} \exp\left( \frac{3 |\tilde{q} - q|}{2 \overline{M} \vartheta }  \right) \indc_{\tilde{s} \geq 0} |\hat{s} - s| \right\} \ens , 
\end{align}
where
\begin{align*}
s = L^2 - \frc{q}{n} - \frc{ b^{(2)}_P}{n-1} \ens , \quad \tilde{s} = L^2 - \frc{\tilde{q}}{n} - \frc{ b^{(2)}_P}{n-1} \ens , \quad \tilde{q} \in (q \wedge \hat{q}, q \vee \hat{q}) \ens .
\end{align*}
\end{lemma}
\begin{proof}
A Taylor-Lagrange expansion of order 1 can be derived for $h(\hat{s})$ since the derivative of $h$ 
\begin{align}
h'(x) = - \frc{(2/3) n \overline{M} \vartheta   x^2 + 4 n \vartheta^2 x}{(2 \vartheta^2 + (2/3) \overline{M} \vartheta  x )^2} \exp\left( - \frc{n x^2}{2 \vartheta^2 + (2/3) \overline{M} \vartheta x}  \right) \indc_{x \geq 0} \notag \ens ,
\end{align}
is well defined for every $x \in \mathbb{R}$ (in particular, the left-side and right-ride derivatives at $x=0$ coincide).\\
Therefore $h(\hat{s})$ equals 
\begin{align}
h(s) + h^{'}(\tilde{s}) & (\hat{s} - s)  \notag \\
& = \exp\left( - \frac{n s^2}{2 \vartheta^2 + (2/3) \overline{M} \vartheta  s}  \right) \left[ 1 + \exp\left( g(s) - g(\tilde{s})  \right) g'(\tilde{s}) \indc_{ \tilde{s} \geq 0} (\hat{s} - s)     \right]  
 \ens ,
\end{align}
where 
\begin{align}
s = L^2 - \frc{q}{n} - \frc{ b^{(2)}_P}{n-1} \ens , \quad \tilde{s} = L^2 - \frc{\tilde{q}}{n} - \frc{ b^{(2)}_P}{n-1} \ens , \quad \tilde{q} \in (q \wedge \hat{q}, q \vee \hat{q}) \ens , \notag
\end{align}
and $s \geq 0$ because of the assumption $n > (q + m_P^{(2)}) L^{-2}$.\\

For every $x, y > 0$, $|g'(x)| \leq 3 n / (2 \overline{M} \vartheta)$ and then $|g(x) - g(y)| \leq 3 n |x - y| / (2 \overline{M} \vartheta )$. It follows
\begin{align}
|g'(\tilde{s})| \leq \frac{3 n }{2 \overline{M} \vartheta } \ens ,
\end{align}
\begin{align}
\exp\left( g(s) - g(\tilde{s})  \right) \leq \exp\left( \frac{3 n}{2 \overline{M} \vartheta } |s - \tilde{s}|  \right) =  \exp\left( \frac{3 |\tilde{q} - q|}{2 \overline{M} \vartheta }  \right)  \ens . \notag
\end{align}

Lemma~\ref{LemmaTaylorLag} is proved.
\end{proof}
\end{proof}
\bibliographystyle{plain}
\bibliography{rkhsgauss-preprint}

\end{document}